\newtheorem{theorem}{Theorem}[section]
\newtheorem{definition}[theorem]{Definition}
\newtheorem{lemma} [theorem]{Lemma}
\newtheorem{proposition}[theorem]{Proposition}
\title{\bf {\sc A Creative Review on Integer Additive Set-Valued Graphs}}
\author{N. K. Sudev}
\affil{\small Department of Mathematics\\ Vidya Academy of Science \& Technology\\Thalakkottukara, Thrissur-680501, Kerala, India.\\E-mail:{\em sudevnk@gmail.com}}
\author{K. A. Germina}
\affil{\small PG \& Research Department of Mathematics\\ Marymatha Arts \& Science College\\Mananthavady, Wayanad-670645, Kerala, India.\\E-mail:{\em srgerminaka@gmail.com}}
\author{K. P. Chithra} 
\affil{\small Naduvath Mana, Nandikkara \\Thrissur-680301, Kerala, India.\\E-email:{\em chithrasudev@gmail.com}}
\date{}
\begin{document}
\maketitle

\begin{abstract}
For a non-empty ground set $X$, finite or infinite, the {\em set-valuation} or {\em set-labeling} of a given graph $G$ is an injective function $f:V(G) \to \mathcal{P}(X)$, where $\mathcal{P}(X)$ is the power set of the set $X$. A set-indexer of a graph $G$ is an injective set-valued function $f:V(G) \to \mathcal{P}(X)$ such that the function $f^{\ast}:E(G)\to \mathcal{P}(X)-\{\emptyset\}$ defined by $f^{\ast}(uv) = f(u ){\ast} f(v)$ for every $uv{\in} E(G)$ is also injective, where $\ast$ is a binary operation on sets. An integer additive set-indexer is defined as an injective function $f:V(G)\to \mathcal{P}({\mathbb{N}_0})$ such that the induced function $f^+:E(G) \to \mathcal{P}(\mathbb{N}_0)$ defined by $f^+ (uv) = f(u)+ f(v)$ is also injective, where $\mathbb{N}_0$ is the set of all non-negative integers. In this paper, we critically and creatively review the concepts and properties of integer additive set-valued graphs.
\end{abstract}
\textbf{Key words}: Set-labeling, set-indexer, integer additive set-labeling, integer additive set-indexer. 

\vspace{0.04in}
\noindent \textbf{AMS Subject Classification : 05C78}

\section{Introduction}

For all  terms and definitions, not defined specifically in this paper, we refer to \cite{FH}. Unless mentioned otherwise, all graphs considered here are simple, finite and have no isolated vertices.

The researches on graph labeling problems commenced with the introduction of the concept of number valuations of graphs in \cite{AR1}. Since then, the studies on graph labeling have contributed significantly to the researches in graph theory and associated fields. Graph labeling problems have numerous theoretical and practical applications. Many types of graph labelings are surveyed and listed in \cite{JAG}.

Motivated from certain problems related to social interactions and social networks, Acharya introduced, in \cite{BDA1}, the notion of set-valuation of graphs analogous to the number valuations of graphs. For a non-empty ground set $X$, finite or infinite, the {\em set-valuation} or {\em set-labeling} of a given graph $G$ is an injective function $f:V(G) \to \mathcal{P}(X)$, where $\mathcal{P}(X)$ is the power set of the set $X$. 

Also, in \cite{BDA1}, a {\em set-indexer} of a graph $G$ is defined as an injective set-valued function $f:V(G) \to \mathcal{P}(X)$ such that the function $f^{\ast}:E(G)\to \mathcal{P}(X)-\{\emptyset\}$ defined by $f^{\ast}(uv) = f(u ){\ast} f(v)$ for every $uv{\in} E(G)$ is also injective, where $\mathcal{P}(X)$ is the set of all subsets of $X$ and $\ast$ is a binary operation on sets. 

Taking the symmetric difference of two sets as the operation  between two set-labels of the vertices of $G$, the following theorem was proved in \cite{BDA1}.

\begin{theorem}\label{T-SIG}
\cite{BDA1} Every graph has a set-indexer.
\end{theorem}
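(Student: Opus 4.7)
My plan is to prove the theorem by explicit construction, exhibiting a set-indexer using the simplest possible labels: distinct singletons.

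Let $G$ have vertex set $V(G) = \{v_1, v_2, \ldots, v_n\}$. I would take the ground set to be $X = \{1, 2, \ldots, n\}$ and define $f : V(G) \to \mathcal{P}(X)$ by $f(v_i) = \{i\}$. Injectivity of $f$ is immediate, since distinct indices give distinct singletons. The remaining task is to verify that the induced edge function $f^{\ast}$, with $\ast$ taken to be the symmetric difference $\triangle$ (as indicated in the paragraph preceding the theorem), is also injective.

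For any edge $e = v_iv_j \in E(G)$ with $i \neq j$, we have $f^{\ast}(e) = \{i\} \triangle \{j\} = \{i,j\}$, a two-element subset of $X$. If $e_1 = v_iv_j$ and $e_2 = v_kv_\ell$ are two distinct edges of $G$, then $\{v_i,v_j\} \neq \{v_k,v_\ell\}$ as unordered pairs, which forces $\{i,j\} \neq \{k,\ell\}$ and hence $f^{\ast}(e_1) \neq f^{\ast}(e_2)$. Since $\{i,j\} \neq \emptyset$ for every edge, the codomain restriction to $\mathcal{P}(X) \setminus \{\emptyset\}$ is respected, and $f^{\ast}$ is injective. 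Thus $f$ is a set-indexer of $G$.

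The construction above truly has no difficult step; the only point worth flagging is a sanity check on the chosen operation. The theorem as stated is really shorthand for ``every graph has a set-indexer with respect to symmetric difference,'' and the singleton labeling succeeds precisely because $\triangle$ on singletons produces exactly the unordered-pair information that already distinguishes edges. If one were to replace $\triangle$ by a less informative operation (say, intersection), the same construction would fail, so the main conceptual content of the proof is simply choosing labels small enough that $f^{\ast}$ retains the combinatorial identity of each edge.
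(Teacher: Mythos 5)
Your construction is correct: distinct singletons under symmetric difference yield $f^{\ast}(v_iv_j)=\{i,j\}$, which recovers the unordered pair of endpoints and hence makes $f^{\ast}$ injective with nonempty values. The paper does not reproduce a proof of this theorem --- it only cites Acharya's monograph and notes that the operation used is symmetric difference --- and your singleton labeling is exactly the standard argument consistent with that remark, so there is nothing to fault and no divergence to report.
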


Let $A$ and $B$ be two sets. The {\em sum set} of $A$ and $B$ is denoted by $A+B$ and is defined as $A+B=\{a+b : a\in A, b\in B\}$. For any positive integer $n$, an $n$-integral multiple of a set $A$, denoted by $n.A$ and is defined as $n.A=\{na : a\in A\}$. We denote the cardinality of a set $A$ by $|A|$.

In this work, we review the studies on the graphs which admit a particular type of set-labeling called integer additive set-labeling, defined in terms of the sum sets of given sets of non-negative integers.

\section{On Integer Additive Set-Labeled Graphs}

During a personal communication with the second author, Acharya introduced the notion of integer additive set-labeling, using the concept of sum sets of two sets of non-negative integers, as follows.

\begin{definition}{\rm 
Let $\mathbb{N}_0$ be the set of all non-negative integers and let $\mathcal{P}(\mathbb{N}_0)$ be its power set. A function $f:V(G)\to \mathcal{P}(\mathbb{N}_0)$, whose associated function $f^+(uv):E(G)\to \mathcal{P}(\mathbb{N}_0)$ is defined by $f^+(uv)=f(u)+f(v)$, is said to be an {\em integer additive set-labeling} if it is injective. A graph $G$ which admits an integer additive set-labeling is called an {\em integer additive set-labeled graph} or {\em integer additive set-valued graph}. }
\end{definition}

The notion of integer additive set-indexers of graphs was first appeared in \cite{GA} as follows. 

\begin{definition}\label{DEFN-1}{\rm
An {\em integer additive set-indexer} (IASI) is defined as an injective function $f:V(G)\to \mathcal{P}(\mathbb{N}_0)$ such that the induced function $f^+:E(G) \to \mathcal{P}(\mathbb{N}_0)$ defined by $f^+ (uv) = f(u)+ f(v)=\{a+b: a \in f(u), b \in f(v)\}$ is also injective. A graph $G$ which admits an IASI is called an integer additive set-indexed graph.}
\end{definition} 

In this discussion, we denote the associated function of an IASL $f$, defined over the edge set of $G$ by $f^+$.

\begin{definition}\label{DEFN-1a}{\rm
An IASL (or IASI) is said to be {\em $k$-uniform} if $|f^+(e)| = k$ for all $e\in E(G)$. That is, a connected graph $G$ is said to have a $k$-uniform IASL if all of its edges have the same set-indexing number $k$.}
\end{definition}

If either $f(u)$ or $f(v)$ is countably infinite, then their sum set will also be countably infinite and hence the study of the cardinality of the sum set $f(u)+f(v)$ becomes countably infinite. Hence, we restrict our discussion to finite sets of non-negative integers.  

Certain studies about integer additive set-indexed graphs have been done in \cite{TMKA}, \cite{GS1}, \cite{GS2} and \cite{GS0}. The following are the important terms and definitions made in these papers.

The cardinality of the set-label of an element (vertex or edge) of a graph $G$ is called the {\em set-indexing number} of that element. If the set-labels of all vertices of $G$ have the same cardinality, then the vertex set $V(G)$ is said to be {\em uniformly set-indexed}. A graph is said to admit {\em $(k,l)$-completely uniform IASI (or IASL)} $f$  if $f$ is a $k$-uniform IASL (or IASI) and $V(G)$ is $l$-uniformly set-indexed.

Analogous to Theorem \ref{T-SIG}, we have proved the following theorem.

\begin{theorem}\label{T-IASL1}
Every graph has an integer additive set-labeling.
\end{theorem}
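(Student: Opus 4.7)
The plan is to exhibit, for any graph $G$, an explicit injective set-labeling $f : V(G) \to \mathcal{P}(\mathbb{N}_0)$ whose induced edge map $f^+$ is automatically well-defined on $E(G)$. Since the ground set $\mathbb{N}_0$ is infinite and only the vertex set needs distinct image sets, the construction should be direct, and the proof will mirror the flavor of Theorem \ref{T-SIG} but using the sum-set operation rather than symmetric difference.

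First, I would enumerate the vertices of $G$ as $v_1, v_2, \ldots, v_n$ (or $v_1, v_2, \ldots$ if $G$ is infinite but at most countable). My candidate labeling is $f(v_i) = \{2^{\,i-1}\}$, a singleton subset of $\mathbb{N}_0$. The powers $2^{\,i-1}$ are pairwise distinct, so $f$ is an injective function from $V(G)$ into $\mathcal{P}(\mathbb{N}_0)$, which meets the injectivity requirement of Definition \ref{DEFN-1}. For every edge $v_iv_j \in E(G)$, the induced label $f^+(v_iv_j) = \{2^{\,i-1} + 2^{\,j-1}\}$ is a nonempty element of $\mathcal{P}(\mathbb{N}_0)$, so $f^+$ is a legitimate edge function into $\mathcal{P}(\mathbb{N}_0)$, as required.

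If, additionally, one wishes the induced edge function to be injective (i.e., to upgrade the labeling to an integer additive set-indexer), the same choice still works: suppose $f^+(v_iv_j) = f^+(v_kv_l)$, so that $2^{\,i-1} + 2^{\,j-1} = 2^{\,k-1} + 2^{\,l-1}$. Since $G$ is simple (no loops), $i\neq j$ and $k\neq l$, and the uniqueness of binary representations forces $\{i,j\} = \{k,l\}$. Hence $v_iv_j = v_kv_l$, so $f^+$ is also injective.

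There is no genuine obstacle here; the argument is essentially an appeal to the existence of arbitrarily large Sidon sets in $\mathbb{N}_0$, realized concretely by powers of two. The only point to note is that the construction uses $\mathbb{N}_0$ rather than a prescribed finite ground set, so that enough distinct singleton subsets are always available regardless of $|V(G)|$; this is the feature that makes the analogy with Theorem \ref{T-SIG} go through without any constraint on the order of $G$.
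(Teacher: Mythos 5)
Your proof is correct, but it is more constructive than the one the paper gives. The paper disposes of Theorem \ref{T-IASL1} with a single observation: since the sum set of two sets of non-negative integers is again a set of non-negative integers, the associated function $f^+$ is automatically well-defined, and an IASL only requires the \emph{vertex} function $f$ to be injective --- so any injective assignment of subsets of $\mathbb{N}_0$ to the vertices already works. You instead exhibit a specific injective labeling, $f(v_i)=\{2^{\,i-1}\}$, and then verify (via uniqueness of binary representations, i.e.\ a Sidon-set argument) that the induced edge function is also injective. That extra step is not needed for Theorem \ref{T-IASL1}, but it means your single construction simultaneously proves the stronger Theorem \ref{T-IASI1a} (every graph has an integer additive set-\emph{indexer}), which the paper treats separately by ``choosing the set-labels suitably'' without an explicit construction. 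What the paper's argument buys is brevity and the realization that the statement is essentially trivial; what yours buys is an explicit witness and the stronger conclusion in one stroke. One cosmetic point: the injectivity requirement for an IASL comes from the unnumbered definition of integer additive set-labeling, not from Definition \ref{DEFN-1} (which defines the set-indexer), so the citation in your second paragraph should be adjusted.
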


The proof follows from the fact that the sum set of two sets of non-negative integers is again a set of non-negative integers. 

The following theorem was proved in \cite{GS0} by choosing the set-labels of the vertices of a given graph $G$ in a suitable manner.

\begin{theorem}\label{T-IASI1a}
\cite{GS0} Every graph has an integer additive set-indexer.
\end{theorem}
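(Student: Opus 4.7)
The plan is to reduce the problem to a purely numerical one by labelling each vertex with a singleton subset of $\mathbb{N}_0$. If $f(v)=\{a_v\}$ for every vertex $v$, then $f(u)+f(v)=\{a_u+a_v\}$, so $f$ is an IASI precisely when the integers $a_v$ are pairwise distinct and the pairwise sums $a_u+a_v$, taken over edges $uv\in E(G)$, are pairwise distinct. Thus it suffices to assign to the vertices of $G$ distinct non-negative integers whose pairwise sums are all different.

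To achieve this I would invoke a Sidon-type construction. Enumerate $V(G)=\{v_1,v_2,\ldots,v_n\}$ and set
\[
f(v_i)=\{2^{i-1}\},\qquad i=1,2,\ldots,n.
\]
Injectivity of $f$ is immediate since the $2^{i-1}$ are distinct. For $f^+$, observe that for any edge $v_iv_j$ (with $i<j$) the sum $2^{i-1}+2^{j-1}$ has binary representation with $1$s exactly in positions $i-1$ and $j-1$, and is therefore determined uniquely by the unordered pair $\{i,j\}$. Hence $f^+$ is injective on $E(G)$, and $f$ is an IASI.

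The main conceptual point, rather than any computational obstacle, is the reduction to the Sidon property: one must recognise that the requirement ``$f^+$ is injective on edges'' is automatically satisfied as soon as the singleton labels form a Sidon set (a $B_2$-set), because then even the complete graph on the chosen integers already has distinct edge sums, and a fortiori any subgraph $G$ does. Powers of two provide the cleanest explicit witness, but any Sidon set of size $|V(G)|$ in $\mathbb{N}_0$ would serve equally well. Since $G$ is finite and such sets exist of arbitrary finite size, the construction goes through for every graph $G$, completing the proof.
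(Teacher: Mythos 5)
Your proof is correct, but it takes a different route from the one the paper sketches. The paper's argument (following \cite{GS0}) establishes injectivity of $f^+$ by choosing set-labels so that \emph{no two edges have the same set-indexing number} --- i.e., the edge set-labels are separated by their cardinalities. Your construction does the opposite: every edge receives a set-label of cardinality $1$, and the labels are separated by their contents, since the singleton vertex labels $\{2^{i-1}\}$ form a Sidon set and hence all pairwise sums $2^{i-1}+2^{j-1}$ ($i\neq j$) are distinct. Both routes are valid; yours has the advantage of being fully explicit (the paper only gestures at a ``suitable'' choice of labels) and it yields the stronger conclusion that every graph admits a $(1,1)$-completely uniform IASI in the paper's terminology, which meshes with Lemma \ref{L-0} (the edge cardinality $1=\max(1,1)=1\cdot 1$ is forced when both end vertices are singletons). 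The only point worth making explicit is that your argument uses the standing assumption that $G$ is simple, so each edge has two distinct endpoints and the relevant sums are over unordered pairs $\{i,j\}$ with $i\neq j$; with that noted, the binary-representation argument closes the proof.
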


The sum sets of different pairs of sets non-negative integers need not be distinct in all cases. But we can establish the existence of an IASI for any given graph $G$ if we choose the set-labels of the vertices of $G$ suitably in such a way that no two edges of $G$ have the same set-indexing number. 

\section{IASLs of Certain Graph Operations}

We know that if $A$ and $B$ are two sets of non-negative integers, then their sum set $A+B$ is also a set of non-negative integers. That is, if $A,B\subset \mathbb{N}_0$, then $A+B\subset \mathbb{N}_0$. This property leads us to the following results on certain operations and products of IASL graphs.

\noindent First, we consider the union of two IASL graphs.

\begin{theorem}
The union of two IASL (or IASI) graphs also admits (induced) IASL (or IASI).
\end{theorem}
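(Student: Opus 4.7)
The plan is to start with IASLs (or IASIs) $f_1$ on $G_1$ and $f_2$ on $G_2$, and patch them into a single labeling on $G_1\cup G_2$. The only obstruction is that the two labelings may reuse the same sets, breaking injectivity of the combined vertex function; I would kill this by translating one of the two labelings by a sufficiently large integer so that its image lies in a completely different range of $\mathbb{N}_0$.

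More precisely, let $M=\max\{x:x\in f_1(v),\ v\in V(G_1)\}$ and pick any integer $N>M$. Define $f_2'(v)=f_2(v)+N$ for $v\in V(G_2)$; since adding a constant to every element of a set is a bijection on $\mathcal{P}(\mathbb{N}_0)$, the map $f_2'$ is still injective. I would then set $f(v)=f_1(v)$ for $v\in V(G_1)$ and $f(v)=f_2'(v)$ for $v\in V(G_2)$. Every element of any $f_2'(v)$ is at least $N>M$, while every element of any $f_1(u)$ is at most $M$, so the (non-empty) sets $f_1(u)$ and $f_2'(v)$ are distinct; combined with the injectivity of $f_1$ and $f_2'$ on their respective pieces, this makes $f$ an IASL of $G_1\cup G_2$.

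For the IASI case I would upgrade the verification to the edge function. For $uv\in E(G_1)$, $f^+(uv)=f_1(u)+f_1(v)$ has maximum element at most $2M$. For $uv\in E(G_2)$, $f^+(uv)=f_2'(u)+f_2'(v)=f_2(u)+f_2(v)+2N$ has minimum element at least $2N>2M$. Hence no edge of $G_1$ can share a sum-set with an edge of $G_2$. Within $G_1$, injectivity of $f^+$ is inherited from $f_1$; within $G_2$, if $f_2'(u)+f_2'(v)=f_2'(x)+f_2'(y)$ then subtracting $2N$ from every element gives $f_2(u)+f_2(v)=f_2(x)+f_2(y)$, which forces $uv=xy$ by injectivity of $f_2^+$. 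Thus $f^+$ is globally injective and $f$ is an IASI.

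The only delicate point I can foresee is the reading of the word \emph{union}. If $G_1\cup G_2$ is interpreted as a disjoint union, the shift construction above works verbatim; if it is meant to permit common vertices, one must additionally demand that $f_1$ and $f_2$ agree on $V(G_1)\cap V(G_2)$ before translating, which restricts the choice of starting labelings but does not alter the core shifting idea. I expect this choice-of-definition issue, rather than any computation, to be the main subtlety.
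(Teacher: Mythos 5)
Your proposal follows the same basic strategy as the paper -- restrict the combined labeling to $f_1$ on $G_1$ and $f_2$ on $G_2$ -- but it supplies a step the paper omits entirely. The paper's proof only stipulates that $f_1$ and $f_2$ agree on common vertices and then declares the patched map ``clearly'' an IASL; it never rules out the real obstruction, namely that $f_1$ and $f_2$ might assign the same set-label to two \emph{distinct} vertices lying in different parts of the union (or the same sum-set to two edges, in the IASI case), which would destroy injectivity. Your translation trick -- replacing $f_2$ by $f_2'(v)=f_2(v)+N$ with $N$ larger than every integer appearing in the image of $f_1$ -- cleanly separates the two images at both the vertex and edge level, and your verification that the shift preserves injectivity of $f_2$ and of $f_2^{+}$ is correct. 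So your argument is strictly more rigorous than the one in the paper. One small caution: the shift is incompatible with the shared-vertex reading of ``union,'' since after translating $f_2$ the two labelings no longer agree on $V(G_1)\cap V(G_2)$; in that setting you would have to leave the common vertices fixed and resolve any remaining collisions among the non-shared vertices by some other means (e.g., re-choosing $f_2$ on $V(G_2)\setminus V(G_1)$), so the definitional issue you flag at the end is not entirely cosmetic. For the disjoint union -- which is the standard reading and the one under which the theorem is cleanly true -- your proof is complete.
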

\begin{proof}
Let $G_1$ and $G_2$ be two graphs that admit the IASLs $f_1$ and $f_2$ respectively, chosen in such a way that  $f_1$ and $f_2$ are the same for the common vertices of $G_1$ and $G_2$, if any. Then, define a function $f$ on $G_1\cup G_2$, such that $f=f_1$ for all vertices in $G_1$ and $f=f_2$ for all vertices in $G_2$ such that the associated function $f^+$ of $f$ is defined as $f^+(uv)=f_1(u)+f_1(v)$ if $uv\in E(G_1)$ and $f^+(uv)=f_2(u)+f_2(v)$ if $uv\in E(G_2)$. Clearly, $f$ is an IASL of $G_1\cup G_2$.
\end{proof} 

\noindent Next, in the following theorem, we check whether the join of two IASL graphs is an IASL graph.

\begin{theorem}
The join of two IASL (or IASI) graphs also admits (induced) IASL (or IASI).
\end{theorem}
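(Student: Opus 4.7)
The plan is to construct the required set-labeling of $G_1+G_2$ directly from given labelings $f_1,f_2$ of $G_1,G_2$ by shifting the labels of $G_2$ far enough to the right that they become disjoint from those of $G_1$ in $\mathbb{N}_0$. Letting $M$ exceed every integer occurring in any $f_1(v)$, I would set $f(v)=f_1(v)$ for $v\in V(G_1)$ and $f(v)=f_2(v)+\{M\}$ for $v\in V(G_2)$. Every element of an $f$-label of a $G_2$-vertex is at least $M$ while every element of an $f$-label of a $G_1$-vertex is at most $M-1$, so the two label families are set-theoretically disjoint; combined with the injectivity of $f_1$ and $f_2$ on their own domains, this makes $f$ injective on $V(G_1+G_2)$, which already establishes the IASL conclusion.

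For the IASI conclusion, the edges of $G_1+G_2$ split into three classes: edges of $G_1$, edges of $G_2$, and cross edges joining $V(G_1)$ to $V(G_2)$. Denoting the maximum element of $f_i$ by $M_i^{\ast}$ and taking $M>\max(2M_1^{\ast},\,M_1^{\ast}+M_2^{\ast})$, the element-ranges of $f^+$-labels in the three classes---contained respectively in $[0,2M_1^{\ast}]$, $[M,\,M+M_1^{\ast}+M_2^{\ast}]$ and $[2M,\,2M+2M_2^{\ast}]$---become pairwise disjoint. Consequently, an equality $f^+(e)=f^+(e')$ forces $e,e'$ to lie in the same class; within the intra-$G_1$ and intra-$G_2$ classes distinctness is inherited from the IASIs $f_1,f_2$, the latter via the observation that intra-$G_2$ labels are uniform translates by $\{2M\}$ of the distinct sets $f_2^+(e)$.

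The hard step is ensuring distinctness of cross-edge labels $f_1(u)+f_2(v)+\{M\}$ across all pairs $(u,v)\in V(G_1)\times V(G_2)$, since in general one can have $f_1(u_1)+f_2(v_1)=f_1(u_2)+f_2(v_2)$ for different pairs (witness $\{0,1\}+\{0,2\}=\{0,2\}+\{0,1\}$). To handle this, I would first rescale $f_1$ by replacing each $f_1(u)$ by $K\cdot f_1(u)$ for an integer $K$ strictly exceeding every element of $f_2$; this preserves the IASI property of $f_1$ because $K\cdot f_1(u)+K\cdot f_1(v)=K\cdot(f_1(u)+f_1(v))$ and scaling by $K$ is injective on $\mathcal{P}(\mathbb{N}_0)$. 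After the rescaling, every element of $K\cdot f_1(u)+f_2(v)$ has the form $Ka+b$ with $a\in f_1(u)$ and $0\le b<K$, so division-with-remainder by $K$ recovers $f_1(u)$ as the set of quotients and $f_2(v)$ as the set of remainders; the original injectivity of $f_1,f_2$ then pinpoints the pair $(u,v)$, giving the cross-edge injectivity. Finally, $M$ is re-chosen to accommodate the enlarged $G_1$-labels and the three-class range argument carries over unchanged.
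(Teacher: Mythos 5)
Your proof is correct, but it takes a genuinely different and considerably more careful route than the paper's. The paper simply puts $f=f_1$ on $V(G_1)$ and $f=f_2$ on $V(G_2)$, writes down the three-case formula for $f^+$, and declares ``clearly $f$ is an IASL''; it never rules out $f_1(u)=f_2(v)$ for some $u\in V(G_1)$, $v\in V(G_2)$ (which would destroy injectivity of $f$), and it says nothing at all about why the induced edge function is injective in the IASI case. You repair both defects by actively modifying the given labelings rather than concatenating them: the translation of the $G_2$-labels by $\{M\}$ with $M>\max(2M_1^{\ast},M_1^{\ast}+M_2^{\ast})$ separates the two vertex-label families and forces the element-ranges of the three edge classes apart, and the dilation of the $G_1$-labels by $K$ makes each cross-edge label decodable by division with remainder, recovering $f_1(u)$ as quotients and $f_2(v)$ as remainders. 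That last step is exactly where the real content lies --- your example $\{0,1\}+\{0,2\}=\{0,2\}+\{0,1\}$ shows cross-edge labels genuinely can collide without such a device --- and the paper's argument simply does not address it. The trade-off is that your $f$ is built from $f_1,f_2$ but does not extend them verbatim, so it is ``induced'' only in a loose sense; since the theorem asserts only that $G_1+G_2$ admits an IASL (or IASI), this is harmless, and some such modification is unavoidable in general.
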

\begin{proof}
Let $G_1$ and $G_2$ be two graphs that admit the IASLs $f_1$ and $f_2$ respectively. Consider the join $G=G_1+G_2$. Define a function $f$ on $G$ in such a way that $f=f_1$ for all vertices in $G_1$ and $f=f_2$ for all vertices in $G_2$ and the associated function $f^+:E(G)\to \mathcal{P}(\mathbb{N}_0)$ of $f$ is defined as 
\begin{equation}
f^+(uv)=
\begin{cases}
f_1(u)+f_1(v) & ~\text{if} ~ u,v\in V(G_1)\\
f_2(u)+f_2(v) & ~\text{if} ~ u,v\in V(G_2)\\
f_1(u)+f_2(v) & ~\text{if} ~ u\in V(G_1), v\in V(G_2).
\end{cases}
\end{equation}
Clearly, $f$ is an IASL of $G_1+G_2$.
\end{proof}

\begin{theorem}
The complement of an IASL graph is also an IASL graph.
\end{theorem}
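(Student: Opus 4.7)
The plan is very short: use the very same labeling. If $f\colon V(G)\to \mathcal{P}(\mathbb{N}_0)$ is an IASL of $G$, I would simply consider $f$ as a function on $V(\bar G)$, exploiting the fact that $V(\bar G)=V(G)$. Since the definition of an IASL demands injectivity only on the vertex set (the induced edge map $f^+(uv)=f(u)+f(v)$ automatically takes values in $\mathcal{P}(\mathbb{N}_0)$ because sum sets of sets of non-negative integers are sets of non-negative integers), the vertex-injectivity of $f$ on $V(G)$ is inherited unchanged on $V(\bar G)$, and the induced edge map on $E(\bar G)$ is a well-defined function into $\mathcal{P}(\mathbb{N}_0)$. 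Thus $f$ is an IASL of $\bar G$.

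The point I would emphasize is that the theorem is stated only for IASLs, and this is essential: for edges $uv\in E(\bar G)$ (i.e.\ non-edges of $G$) the sums $f(u)+f(v)$ carry no injectivity guarantee, so the argument does not automatically extend to IASIs. As a sanity check, one can alternatively appeal directly to Theorem \ref{T-IASL1}, which already guarantees that every graph, including $\bar G$, admits an IASL, making the statement essentially a corollary. I do not anticipate any obstacle, since the IASL requirement is preserved under any operation that leaves the vertex set intact; the only thing worth writing down carefully is that we are not claiming the inherited labeling to be an IASI, which would require a genuinely different argument.
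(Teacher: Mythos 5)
Your proof is correct and follows essentially the same route as the paper: since $V(\overline{G})=V(G)$, the same vertex labeling $f$ serves as an IASL of $\overline{G}$, injectivity being required only on the vertex set. Your additional caveat that the argument does not automatically carry over to IASIs (since the induced edge map on $E(\overline{G})$ has no injectivity guarantee) is a worthwhile observation that the paper's proof leaves implicit.
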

\begin{proof}
The proof follows from the fact that a graph $G$ and its complement $\overline{G}$ have the same set of vertices and hence have the same set of set-labels.  That is, an IASL $f$ defined on a graph $G$ is an IASL of the complement of $G$ also.
\end{proof}

\noindent Now, recall the definitions of the certain fundamental products of two graphs given in \cite{HIS}.

Let $G_1(V_1,E_1)$ and $G_2(V_2,E_2)$ be two graphs. Then, the {\em Cartesian product} of $G_1$ and $G_2$, denoted by $G_1\Box G_2$, is the graph with vertex set $V_1\times V_2$  defined as follows. Let $u=(u_1, u_2)$ and $v=(v_1,v_2)$ be two points in $V_1\times V_2$. Then, $u$ and $v$ are adjacent in $G_1\Box G_2$ whenever [$u_1=v_1$ and $u_2$ is adjacent to $v_2$] or [$u_2=v_2$ and $u_1$ is adjacent to $v_1$].

The {\em direct product} of two graphs $G_1$ and $G_2$, is the graph whose vertex set is $V(G_1)\times V(G_2)$ and for which the vertices $(u,v)$ and $(u',v')$ are adjacent if $uu'\in E(G_1)$ and $vv'\in E(G_2)$. The direct product of $G_1$ and $G_2$ is denoted by $G_1\times G_2$.

The {\em strong product} of two graphs $G_1$ and $G_2$ is the graph, denoted by $G_1\boxtimes G_2$, whose vertex set is $V(G_1) \times V(G_2)$ and for which the vertices $(u,v)$ and $(u',v')$ are adjacent if $[uu'\in E(G_1)~\text{and}~ v=v']$ or $[u=u' ~ \text{and}~vv'\in E(G_2)]$ or $[uu'\in E(G_1)$ and $vv'\in E(G_2)]$.

\cite{IK} The {\em lexicographic product} or {\em composition} of two graphs $G_1$ and $G_2$ is the graph, denoted by $G_1\circ G_2$, is the graph whose vertex set $V(G_1)\times V(G_2)$ and for two vertices $(u,v)$ and $(u',v')$ are adjacent if $[uu'\in E(G_1)]$ or $[u=u'~\text{and}~ vv'\in E(G_2)]$.

\noindent Now we proceed to verify whether these products of two IASL graphs admit IASLs.

\begin{theorem}
The Cartesian product of two IASL graphs also admits an IASL.
\end{theorem}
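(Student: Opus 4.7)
The plan is to construct an IASL on $G_1\Box G_2$ by combining the given IASLs $f_1$ and $f_2$ of $G_1$ and $G_2$. Since the definition of an IASL requires only that the vertex labeling be injective (the induced edge function is not required to be), the task reduces to exhibiting an injective set-valued function on $V(G_1)\times V(G_2)$.

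The naive guess $f\bigl((u_1,u_2)\bigr)=f_1(u_1)+f_2(u_2)$ will in general fail to be injective, because sum-set addition on $\mathcal{P}(\mathbb{N}_0)$ is not cancellative: distinct pairs of sets can produce the same sum set. To circumvent this, I would introduce a positional scaling factor. Let $M$ be any positive integer strictly exceeding every element that appears in some vertex label of $G_1$, i.e.\ $M>\max\bigcup_{u\in V(G_1)}f_1(u)$; such $M$ exists because $G_1$ is finite and each $f_1(u)$ is a finite subset of $\mathbb{N}_0$. Then define
\[
f\bigl((u_1,u_2)\bigr) \;=\; f_1(u_1)\,+\,M\cdot f_2(u_2),
\]
where $M\cdot f_2(u_2)=\{Mb:b\in f_2(u_2)\}$ is the integral-multiple notation recalled in the introduction. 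Clearly $f\bigl((u_1,u_2)\bigr)\subset\mathbb{N}_0$, so $f$ maps into $\mathcal{P}(\mathbb{N}_0)$.

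The key step is verifying injectivity of $f$. Every element of $f\bigl((u_1,u_2)\bigr)$ has the form $a+Mb$ with $a\in f_1(u_1)$, $b\in f_2(u_2)$, and since $a<M$ by the choice of $M$, the pair $(a,b)$ can be uniquely recovered from the integer $a+Mb$ as its residue and its quotient modulo $M$. Consequently one can read off $f_1(u_1)=\{x\bmod M:x\in f((u_1,u_2))\}$ and $f_2(u_2)=\{\lfloor x/M\rfloor:x\in f((u_1,u_2))\}$ from the single set $f\bigl((u_1,u_2)\bigr)$. Because $f_1$ and $f_2$ are themselves injective, the equality $f\bigl((u_1,u_2)\bigr)=f\bigl((v_1,v_2)\bigr)$ forces $(u_1,u_2)=(v_1,v_2)$, so $f$ is injective and hence an IASL of $G_1\Box G_2$.

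The main obstacle I anticipate is precisely this injectivity verification: one must choose the scaling constant $M$ large enough that the residue--quotient decomposition cleanly separates the $f_1$-part from the $f_2$-part. Once $M$ is pinned down, the rest is routine bookkeeping, and the construction ignores the adjacency structure of $G_1\Box G_2$ entirely, exactly as in the proof of Theorem~\ref{T-IASL1}.
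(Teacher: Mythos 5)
Your proof is correct, but it takes a genuinely different --- and more careful --- route than the paper. The paper's own proof simply sets $f(u_i,v_j)=f_1(u_i)+f_2(v_j)$ (exactly the ``naive guess'' you reject) and asserts without any argument that this is an IASL of $G_1\Box G_2$; no injectivity check is given, and as you observe the assertion can fail: with $f_1(u_1)=\{0\}$, $f_1(u_2)=\{1\}$, $f_2(v_1)=\{1\}$, $f_2(v_2)=\{0\}$ one gets $f(u_1,v_1)=\{1\}=f(u_2,v_2)$. Your modification $f\bigl((u_1,u_2)\bigr)=f_1(u_1)+M\cdot f_2(u_2)$ with $M>\max\bigcup_{u\in V(G_1)}f_1(u)$ repairs this: since every label element decomposes uniquely as $a+Mb$ with $0\le a<M$, the residue and quotient modulo $M$ recover $f_1(u_1)$ and $f_2(u_2)$ from the set $f\bigl((u_1,u_2)\bigr)$, and injectivity of $f$ then follows from that of $f_1$ and $f_2$. (The one tacit hypothesis is that all set-labels are non-empty, which is the standing convention in this literature; it is needed so that the two projections really return the full sets.) What your approach buys is an actual proof where the paper offers only an unjustified claim; and, exactly as you note, since the adjacency structure of the product is irrelevant to the argument, your construction proves the analogous statements for the direct, strong and lexicographic products in one stroke --- which is precisely what the paper's subsequent unproved theorems require.
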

\begin{proof}
Let $u_1,u_2,\ldots, u_m$ be the vertices of $G_1$ and $v_1,v_2,\ldots, v_n$ be the vertices of $G_2$. Also let $f_1$, $f_2$ be the IASLs defined on $G_1$ and $G_2$ respectively. Now, define a function $f$ on $G_1\Box G_2$ such that $f(u_i,v_j)=f_1(u_1)+f_2(v_j)$ where the associated function $f^+:(E(G_1\Box G_2))\to \mathcal{P}(\mathbb{N}_0)$ is defined by $f^+((u_i,v_j),(u_r,v_s))=f(u_i,v_j)+f(u_r,v_s)$.
Therefore, $f$ is an IASL on $G_1\Box G_2$.
\end{proof}

\noindent Using the definition of $f$ for the vertices and the associated function $f^+$ for the edges, we can establish the existence of IASLs for the other fundamental graph products also as stated in the following results.

\begin{theorem}
The direct product of two IASL graphs also admits an IASL.
\end{theorem}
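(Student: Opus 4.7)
The plan is to mirror the construction used for the Cartesian product. Let $G_1$ and $G_2$ admit IASLs $f_1$ and $f_2$, respectively. For each vertex $(u,v) \in V(G_1\times G_2)$, define
\[f(u,v) = f_1(u) + f_2(v),\]
and let $f^+$ be the associated edge function given by $f^+((u,v)(u',v')) = f(u,v) + f(u',v')$. The key observation is that if $(u,v)(u',v')$ is an edge of $G_1\times G_2$, then by the definition of the direct product we have $uu' \in E(G_1)$ and $vv' \in E(G_2)$, so
\[f^+((u,v)(u',v')) = [f_1(u) + f_1(u')] + [f_2(v) + f_2(v')] = f_1^+(uu') + f_2^+(vv').\]
Thus $f^+$ factors cleanly into the two edge labelings on $G_1$ and $G_2$.

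The main obstacle is the injectivity of $f$ on vertices and of $f^+$ on edges, since in general two distinct pairs of finite subsets of $\mathbb{N}_0$ can have the same sum set. I would resolve this by exploiting the freedom guaranteed by Theorem~\ref{T-IASI1a}: replace $f_2$ by the scaled labeling $v \mapsto M.f_2(v)$, where $M$ is a positive integer strictly larger than every element occurring in any $f_1(u)$ or any $f_1^+(uu')$. After this rescaling, every element of $M.f_2(v)$ is a multiple of $M$, while every element of any $f_1(u)$ or $f_1^+(uu')$ lies in the interval $[0,M)$; hence reduction modulo $M$ unambiguously separates the two contributions in any sum $f_1(u)+M.f_2(v)$ or $f_1^+(uu')+M.f_2^+(vv')$, forcing injectivity of both $f$ and $f^+$.

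Finally, $f$ takes values in $\mathcal{P}(\mathbb{N}_0)$ because sum sets of sets of non-negative integers are again sets of non-negative integers. Combined with the injectivity established above, this shows that $f$ is an IASL of $G_1\times G_2$. I expect the scale-separation step to carry the real content of the argument, with the remaining computations being purely formal bookkeeping analogous to the Cartesian product case.
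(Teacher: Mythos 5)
Your construction is the one the paper intends: the theorem is stated without its own proof, with the preceding remark directing the reader to reuse the Cartesian-product argument, namely $f(u,v)=f_1(u)+f_2(v)$ with $f^+$ the sum set of the end-vertex labels. So at the level of the basic construction you and the paper agree. Where you genuinely diverge is that the paper (both here and in the Cartesian-product proof it points to) simply asserts ``clearly $f$ is an IASL'' and never addresses injectivity, which is the only non-trivial requirement in the definition. Your scale-separation step fills exactly that hole: choosing $M$ larger than every element of every $f_1(u)$ and every $f_1^+(uu')$ and replacing $f_2$ by $v\mapsto M.f_2(v)$ makes each element of $f_1(u)+M.f_2(v)$ (respectively $f_1^+(uu')+M.f_2^+(vv')$) a number $a+Mb$ with $0\le a<M$, so the division algorithm recovers both summand sets from the sum set, and injectivity of $f$ (and of $f^+$, giving the IASI version for free) follows from that of $f_1$ and $f_2$. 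Two small remarks: the appeal to Theorem~\ref{T-IASI1a} is not really what licenses the rescaling --- what you need is the easy observation that $M.f_2$ is again an IASL with associated edge function $M.f_2^+$, since $M.A+M.B=M.(A+B)$ and $A\mapsto M.A$ is injective; and strictly speaking only injectivity of $f$ on vertices is required for an IASL, so your edge-level argument is a bonus rather than a necessity. Overall your version is more rigorous than the paper's.
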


\begin{theorem}
The strong product of two IASL graphs also admits an IASL.
\end{theorem}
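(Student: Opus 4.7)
The plan is to follow the exact template used in the preceding proofs for the Cartesian and direct products. Let $f_1:V(G_1)\to \mathcal{P}(\mathbb{N}_0)$ and $f_2:V(G_2)\to \mathcal{P}(\mathbb{N}_0)$ be IASLs of $G_1$ and $G_2$ respectively, with $V(G_1)=\{u_1,u_2,\ldots,u_m\}$ and $V(G_2)=\{v_1,v_2,\ldots,v_n\}$. Define $f:V(G_1\boxtimes G_2)\to \mathcal{P}(\mathbb{N}_0)$ by $f(u_i,v_j)=f_1(u_i)+f_2(v_j)$, and let the associated function $f^+$ on $E(G_1\boxtimes G_2)$ be given by $f^+((u_i,v_j)(u_r,v_s))=f(u_i,v_j)+f(u_r,v_s)$.

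The verification breaks into two observations. First, since $f_1(u_i)$ and $f_2(v_j)$ are non-empty subsets of $\mathbb{N}_0$, their sum set is a non-empty subset of $\mathbb{N}_0$, so $f$ takes values in $\mathcal{P}(\mathbb{N}_0)$; similarly, $f^+$ does. Second, $f$ must be injective on $V(G_1\boxtimes G_2)$. Since the vertex set of the strong product coincides as a set with the vertex sets of the Cartesian and direct products already treated, exactly the same injectivity argument applies here: it can always be arranged by choosing $f_1$ and $f_2$ so that the pair $(i,j)$ is recoverable from the sum set $f_1(u_i)+f_2(v_j)$, for instance by requiring every element of every set in the image of $f_2$ to strictly exceed every element appearing in the sets in the image of $f_1$.

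Crucially, none of the above depends on which pairs of vertices are adjacent in $G_1\boxtimes G_2$; the argument uses only the vertex set of the product and the closure of $\mathbb{N}_0$ under addition. Therefore, the three distinct edge types of the strong product --- the two Cartesian-style edges $(u,v)(u',v)$ with $uu'\in E(G_1)$ and $(u,v)(u,v')$ with $vv'\in E(G_2)$, together with the additional ``diagonal'' edges $(u,v)(u',v')$ with $uu'\in E(G_1)$ and $vv'\in E(G_2)$ --- introduce no new obstacle, because each such edge is simply assigned the sum set of the labels of its endpoints.

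The step I expect to be the main obstacle is, as in the earlier product proofs, confirming injectivity of $f$ on $V(G_1\boxtimes G_2)$, since naive sum-set labelling need not separate distinct pairs. I would dispose of this once and for all by the separation-of-scales choice of $f_1$ and $f_2$ indicated above; once $f$ is injective, the remaining verification that $f$ is an IASL of $G_1\boxtimes G_2$ is immediate.
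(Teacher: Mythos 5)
Your proposal is correct and follows essentially the same route as the paper: the paper proves the theorem by the remark that the vertex labelling $f(u_i,v_j)=f_1(u_i)+f_2(v_j)$ and edge labelling $f^+$ used for the Cartesian product carry over verbatim to the other fundamental products, since the construction depends only on the vertex set $V(G_1)\times V(G_2)$ and not on the adjacency rule. If anything, your explicit attention to the injectivity of $f$ (via the separation-of-scales choice of $f_1$ and $f_2$) supplies a detail the paper leaves unaddressed, but it does not constitute a different approach.
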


\begin{theorem}
The lexicographic product of two IASL graphs is also an IASL graph.
\end{theorem}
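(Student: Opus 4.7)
The shortest plan is to appeal to Theorem~\ref{T-IASL1} directly: $G_1 \circ G_2$ is itself a graph, and every graph admits an IASL, so $G_1 \circ G_2$ does. This is the same trivial route used earlier for the complement.

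A more constructive plan that uses the given IASLs $f_1, f_2$ — and mirrors the Cartesian, direct, and strong product proofs — would start from $f(u, v) = f_1(u) + f_2(v)$. However, this template is doomed for the lexicographic product: whenever $uu' \in E(G_1)$ and $v \ne v'$, both $(u, v)(u', v')$ and $(u, v')(u', v)$ are edges of $G_1 \circ G_2$, and their induced labels both equal $f_1(u) + f_1(u') + f_2(v) + f_2(v')$. So any proof of this style must break the symmetry between the two $G_2$-coordinates of an \emph{external} edge (one whose projection onto $G_1$ is a genuine edge of $G_1$).

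The repair I would propose is a marker construction. Fix a numbering $V(G_2) = \{v_1, \ldots, v_n\}$, pick $N$ larger than every element appearing in any $f_1$- or $f_2$-label and larger than $2^n$, and define $f(u, v_j) = f_1(u) \cup \{N + 2^{j-1}\}$. The induced label of an edge then decomposes, by the magnitude of its elements, into four disjoint arithmetic layers: a low layer $f_1(u) + f_1(u')$, two medium layers $f_1(u) + \{N + 2^{k-1}\}$ and $f_1(u') + \{N + 2^{j-1}\}$ (each tagged by a distinct marker shift near $N$), and a singleton top layer $\{2N + 2^{j-1} + 2^{k-1}\}$. Binary uniqueness reads off the unordered pair $\{j, k\}$ from the top layer, and matching the two medium layers to their marker shifts identifies the pairing of $G_1$- with $G_2$-coordinates, eliminating the crossing ambiguity.

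The main obstacle will be the bookkeeping needed to verify (i) that the four layers are truly disjoint for the chosen $N$, (ii) that external edges ($u \ne u'$) are distinguished from \emph{vertical} edges ($u = u'$ with $v_jv_k \in E(G_2)$) by whether the two medium layers share a common unshifted $f_1$-set, and (iii) that the unordered pair of endpoints is uniquely recovered in each case. Once this layered separation is in place, injectivity of $f_1^+$ on $E(G_1)$ and of $f_2^+$ on $E(G_2)$ handle the remaining identifications and give injectivity of the induced $f^+$.
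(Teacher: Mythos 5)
Your proposal is correct, but it is not the route the paper takes. The paper offers no separate proof for the lexicographic product: it simply declares that the template of the Cartesian product proof --- set $f(u,v)=f_1(u)+f_2(v)$ and let $f^+$ be the sum set of the end vertices --- carries over to the direct, strong and lexicographic products. Your first observation, that the statement already follows from Theorem~\ref{T-IASL1} exactly as the complement result does, is the cleanest valid argument, since Definition~2.1 only requires the \emph{vertex} function $f$ to be injective. Your second observation is a genuine and worthwhile criticism of the paper's implicit argument: for the lexicographic (and likewise the direct and strong) product, the crossing edges $(u,v)(u',v')$ and $(u,v')(u',v)$ receive identical sum-set labels under the template, so the induced $f^+$ cannot be injective and the template can never produce an IASI; moreover even vertex-injectivity of $u\mapsto f_1(u)+f_2(v)$ is not automatic, a point neither you nor the paper addresses but which your marker construction sidesteps, since distinct markers near $N$ force distinct vertex labels. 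The marker construction itself is sound in outline and, modulo the bookkeeping you acknowledge (in particular the case $v_j=v_k$, where the two medium layers merge and you must fall back on injectivity of $f_1^+$ applied to the low layer to recover the unordered pair $\{u,u'\}$), it proves the strictly stronger IASI version of the statement. In short: you prove more than the paper claims, by a more careful argument than the paper gives; the only caveat is that for the theorem as literally stated the heavy machinery is unnecessary, which you yourself point out.
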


\noindent Next, we consider certain products of graphs in which we take finite number copies of any one of the two graphs and attach these copies to the vertices of the other graph using certain rules. 

The most popuar one among such graph products, is the {\em corona} of two graphs which is defined in \cite{FFH} as follows.
 
The {\em corona} of two graphs $G_1$ and $G_2$, denoted by $G_1\odot G_2$, is the graph obtained by taking $|V(G_1)|$ copies of $G_2$ and then joining the $i$-th point of $G_1$ to every point of the $i$-th copy of $G_2$.

Another commonly used graph product in various literature, is the {\em rooted product} of two graphs, which is defined in \cite{GM} as follows.

The {\em rooted product} of a graph $G_1$ on $n_1$ vertices and rooted graph $G_2$ on $n_2$ vertices, denoted by $G_1\circ G_2$, is defined as the graph obtained by taking $n_1$ copies of $G_2$, and for every vertex $v_i$ of $G_1$ and identifying $v_i$ with the root node of the $i$-th copy of $G_2$.

The admissibility of an IASL by these products of two IASL graphs is established in the following theorem.

\begin{theorem}
The corona of two IASL graphs also admits an IASL.
\end{theorem}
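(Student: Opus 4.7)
The plan is to mimic the constructive proofs already given for the Cartesian, direct, strong and lexicographic products. Label the vertices of $G_1$ as $u_1, u_2, \ldots, u_m$ carrying the IASL $f_1$, and those of $G_2$ as $v_1, v_2, \ldots, v_n$ carrying the IASL $f_2$. Denote the $i$-th copy of $G_2$ inside $G_1 \odot G_2$ by $G_2^{(i)}$ and write its vertices as $v_{i,1}, v_{i,2}, \ldots, v_{i,n}$, where $v_{i,j}$ is the copy of $v_j$ in $G_2^{(i)}$.

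Next, I would define $f : V(G_1 \odot G_2) \to \mathcal{P}(\mathbb{N}_0)$ by $f(u_i) = f_1(u_i)$ on the vertices inherited from $G_1$, and by $f(v_{i,j}) = f_1(u_i) + f_2(v_j)$ on the vertices of $G_2^{(i)}$. The associated edge function is then $f^+(xy) = f(x) + f(y)$ on each of the three edge types present in $G_1 \odot G_2$: the edges of $G_1$, the edges inside each copy $G_2^{(i)}$, and the pendant-type edges $u_i v_{i,j}$ joining $u_i$ to every vertex of $G_2^{(i)}$. In each case the resulting value lies in $\mathcal{P}(\mathbb{N}_0)$, since the sum set of two subsets of $\mathbb{N}_0$ is again a subset of $\mathbb{N}_0$.

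The only non-routine point is verifying the injectivity of $f$, because a priori the sum sets $f_1(u_i) + f_2(v_j)$ for distinct pairs $(i,j)$ could coincide with one another, or with some $f_1(u_k)$. This is where I expect the real work to lie. My plan is to exploit the freedom already used in the proof of Theorem~\ref{T-IASL1}: one may first adjust the given IASLs by shifts, replacing $f_2(v_j)$ by $f_2(v_j) + \{c\}$ for a constant $c$ strictly exceeding $\max \bigcup_i f_1(u_i)$, so that every element of each set $f_1(u_i) + f_2(v_j)$ strictly exceeds every element of every $f_1(u_k)$. With a further spreading of the $f_1(u_i)$ across disjoint integer intervals of sufficient width, the translate $f_1(u_i) + f_2(v_j)$ lies in an interval determined uniquely by $i$, and then $f_2$-injectivity distinguishes the inner index $j$. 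This renders $f$ injective on $V(G_1 \odot G_2)$, and hence $f$ is an IASL of $G_1 \odot G_2$ as required.
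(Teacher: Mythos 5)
Your construction is genuinely different from the paper's: the paper labels the vertex $v_{ij}$ of the $i$-th copy of $G_2$ by the integral multiple $i.f_2(v_j)=\{ia : a\in f_2(v_j)\}$, whereas you use the sum set $f_1(u_i)+f_2(v_j)$; and unlike the paper, you actually confront the injectivity of the vertex labeling, which is the whole content of the statement. That is the right instinct, but the last step of your injectivity argument fails. After the shift and the spreading you have correctly arranged that copy-vertex labels differ from $G_1$-vertex labels and that $f_1(u_i)+f_2(v_j)\neq f_1(u_{i'})+f_2(v_{j'})$ whenever $i\neq i'$. For fixed $i$, however, you claim that injectivity of $f_2$ ``distinguishes the inner index $j$''; this is false, because the map $B\mapsto A+B$ need not be injective on sets. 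For instance $\{0,1\}+\{0,2\}=\{0,1,2,3\}=\{0,1\}+\{0,1,2\}$, so if $f_1(u_i)=\{0,1\}$ (up to the translations you performed) and two vertices of $G_2$ carry the labels $\{0,2\}$ and $\{0,1,2\}$, the corresponding vertices of the $i$-th copy receive the same set-label and $f$ is not injective.

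The gap is repairable within your own framework: since you already allow yourself to replace the given IASLs by more convenient ones (as Theorem \ref{T-IASL1} permits), take $f_1$ to be singleton-valued, say $f_1(u_i)=\{a_i\}$ with the $a_i$ suitably spread out. Then $f_1(u_i)+f_2(v_j)=a_i+f_2(v_j)$ is a translate of $f_2(v_j)$, translation by a fixed integer is injective on sets, and your interval argument separates distinct values of $i$; injectivity of $f$ then follows. For comparison, the paper's proof via $i.f_2(v_j)$ is stated with no injectivity verification at all and has an analogous defect, since distinct pairs $(i,j)$ can yield equal multiples (e.g. $2.\{1,2\}=\{2,4\}=1.\{2,4\}$); so your attention to this point is an improvement over the source even though the execution is incomplete.
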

\begin{proof}
Let $u_1, u_2,\ldots, u_m$ be the vertices of $G_1$ and let $v_1, v_2,\ldots, v_n$ be the vertices of $G_2$. For $1\le i\le m$ and $1\le j\le n$, let $v_{ij}$ be the $j$-th vertex of the $i$-th copy of $G_2$. Now, label the vertex $v_{ij}$ of $i$-th copy $G_2$ corresponding to vertex $v_j$ of $G_2$ by the set $f_i(v_{ij})=i.f(v_j)$. Let the set-label of an edge in $G$ be the sum set of the set-labels of its end vertices. Clearly, this labeling is an IASL on $G_1\odot G_2$.
\end{proof}

\begin{theorem}
The rooted product of two IASL graphs is also an IASL graph.
\end{theorem}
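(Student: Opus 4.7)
The plan is to mimic the proof of the corona theorem immediately above. Let $u_1,u_2,\ldots,u_m$ be the vertices of $G_1$ and let $v_1,v_2,\ldots,v_n$ be the vertices of the rooted graph $G_2$ with $v_1$ designated as the root. By Theorem \ref{T-IASI1a}, pick an IASL $f$ of $G_2$. For $1\le i\le m$ and $1\le j\le n$, write $v_{ij}$ for the $j$-th vertex of the $i$-th copy of $G_2$, with the convention that $v_{i1}$ is identified with $u_i$ in $G:=G_1\circ G_2$. I will define a single candidate set-labeling $g$ on $V(G)$ and then verify that both $g$ and its associated edge function $g^+$ are injective.

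The candidate is $g(v_{ij})=i\cdot f(v_j)$ for all $i,j$; in particular $g(u_i)=g(v_{i1})=i\cdot f(v_1)$, so the identification forced by the rooted product is respected. The edge function is $g^+(xy)=g(x)+g(y)$. Vertex injectivity is checked first: within a fixed copy the map $v_{ij}\mapsto i\cdot f(v_j)$ is injective because $f$ is, and across copies the scalar $i$ separates the images provided $f$ is chosen so that no nontrivial proportionality $i\cdot f(v_j)=i'\cdot f(v_{j'})$ occurs (this can always be arranged by taking, for example, the labels of $G_2$ produced in Theorem \ref{T-IASI1a} after a harmless shift, or by replacing the scalar $i$ by a rapidly growing sequence such as a suitable power of a prime).

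Edge injectivity splits into three types. Edges inside the $i$-th copy of $G_2$ come from edges $v_jv_k\in E(G_2)$ and receive the label $i\cdot f(v_j)+i\cdot f(v_k)=i\cdot(f(v_j)+f(v_k))$; these are mutually distinct within the copy because $f^+$ is injective on $E(G_2)$, and they are separated across copies by the common factor $i$ (under the same genericity assumption on $f$). Edges of $G_1$ appear in $G$ as edges between the roots $v_{i1}$ and $v_{i'1}$ and carry the label $i\cdot f(v_1)+i'\cdot f(v_1)$, which is symmetric in $i,i'$ and therefore at worst conflates the unordered pair $\{i,i'\}$ with itself — exactly what the edge $u_iu_{i'}$ of $G_1$ demands.

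The step I expect to be the main obstacle is ruling out accidental collisions between the $G_1$-edge labels and the in-copy edge labels, since both live in $\mathbb{N}_0$ and the scalars $i,i'$ are small. The cleanest remedy is to strengthen the choice of $f$ in advance: replace $f$ by its shift $f+\{c\}$ for a large constant $c$ (still an IASL of $G_2$), and replace the multiplier $i$ by a prime $p_i$ exceeding all elements appearing in $\bigcup_jf(v_j)$. With such a choice each label is unambiguously recoverable from its largest element modulo the chosen primes, and the three families of edge labels become pairwise disjoint, completing the verification that $g$ is an IASL of $G_1\circ G_2$.
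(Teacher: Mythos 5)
Your construction is essentially the paper's: the paper also separates the copies of $G_2$ by assigning distinct integral multiples of the $G_2$-labels to the non-root vertices, the only difference being that it keeps the original $G_1$-labels on the identified root vertices while you relabel the roots as $i\cdot f(v_1)$; both choices work. Two remarks. First, you are proving more than the statement asks: an integer additive set-\emph{labeling} only requires the vertex function to be injective (injectivity of the induced edge function is the IASI condition), so your entire edge-injectivity analysis, while harmless, is not needed to establish the theorem as stated. Second, your worry about collisions among the integral multiples is legitimate, and it is precisely the point the paper's own one-line proof glosses over: distinct integral multiples of sets need not be distinct sets (for instance $2\cdot\{1,2\}=\{2,4\}=1\cdot\{2,4\}$, and $i\cdot\{0\}=\{0\}$ for every $i$), and your fix --- shift the labels away from $0$ and use pairwise distinct prime multipliers $p_i$ exceeding every element occurring in the labels, so that divisibility by $p_i$ identifies the copy --- genuinely closes that gap. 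So your write-up is, if anything, more careful than the paper's.
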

\begin{proof}
In the rooted product $G_1\circ G_2$, for $1\le i\le |V(G_1)|$, the roots of the $i$-th copy of $G_2$ is identified with the $i$-th vertex of $G_1$. Label these identified vertices by the corresponding set-labels of $G_1$. All other vertices in the different copies of $G_2$ by distinct integral multiples of the set-labels of corresponding vertices of $G_2$. Also, let the set-labels of edges in $G_1\circ G_2$ be the sum set of the set-labels of their end vertices. Clearly, this set-labeling is an IASL on $G_1\circ G_2$.
\end{proof}

\section{IASLs of Certain Associated Graphs}

An IASL of a graph $G$ induces an IASL to certain other graphs associated to $G$. The induction property of a IASL of a graph $G$ to various associated graphs of $G$ is studied in \cite{GS0}. The major concepts and findings in this paper are mentioned in this section. More over, we establish some new results in this area.

An obvious result about an IASI $f$ of a given graph $G$ is its hereditary nature. The following theorem shows that an IASI of a given graph induces an IASI on any subgraph of $G$.

\begin{proposition}\label{P-IASI2}
\cite{GS0} If a given graph $G$ admits an IASL (or IASI) $f$, then any subgraph $H$ of $G$ also admits an IASI $f^*$, the restriction of $f$ to $V(H)$. That is, the existence of an IASL for a graph $G$ is a hereditary property.
\end{proposition}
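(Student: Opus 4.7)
The plan is simply to take $f^\ast$ to be the restriction $f|_{V(H)}$ of $f$ to the vertex set of $H$, and then to verify that both injectivity conditions required by Definition \ref{DEFN-1} are inherited from the corresponding conditions for $f$ on $G$. No new construction of set-labels is needed; the whole content of the proposition is that restriction preserves the defining properties.

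First, I would observe that $f^\ast\colon V(H) \to \mathcal{P}(\mathbb{N}_0)$ is well-defined because $V(H) \subseteq V(G)$ and $f$ already takes values in $\mathcal{P}(\mathbb{N}_0)$. Injectivity of $f^\ast$ is then a one-line consequence of injectivity of $f$: if $u, v \in V(H)$ satisfy $f^\ast(u) = f^\ast(v)$, then $f(u) = f(v)$ and hence $u = v$. This already handles the IASL part of the statement.

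For the IASI part, I would next verify that the induced edge function $(f^\ast)^+\colon E(H) \to \mathcal{P}(\mathbb{N}_0)$, defined by $(f^\ast)^+(uv) = f^\ast(u) + f^\ast(v) = f(u) + f(v)$, is injective. Since $H$ is a subgraph of $G$, every edge of $H$ is an edge of $G$, so $E(H) \subseteq E(G)$ and on this common domain $(f^\ast)^+$ agrees with $f^+$. The injectivity of $(f^\ast)^+$ thus follows immediately from the injectivity of $f^+$, restricted to the subset $E(H)$.

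There is essentially no obstacle in this argument; it is a routine restriction argument. The only subtlety worth flagging is a definitional one, namely that the word \emph{subgraph} must be interpreted so that each edge of $H$ is an edge of $G$ (as is standard in \cite{FH}), since only then does $(f^\ast)^+$ coincide with $f^+$ on $E(H)$ and inherit injectivity. Once this is noted, the proof is complete and the hereditary nature of IASL/IASI is established.
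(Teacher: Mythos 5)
Your proof is correct and is exactly the routine restriction argument the paper has in mind: the paper states this proposition as an ``obvious result'' cited from \cite{GS0} without reproducing a proof, and the intended argument is precisely that injectivity of $f$ and $f^+$ is inherited on the subsets $V(H)\subseteq V(G)$ and $E(H)\subseteq E(G)$. Your remark about the definition of subgraph guaranteeing $E(H)\subseteq E(G)$ is a reasonable point of care but raises no genuine issue.
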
 

If we replace an element (a vertex or an edge) of $G$ by another element (a vertex or an edge) which is not in $G$ to form an associated graph structure, say $G'$, it is customary that the common elements of the graph $G$ and the associated graph $G'$ preserve the same set-labels and the newly introduced elements assume the same set-label of the corresponding deleted elements of $G$. That is, the IASI defined on $G$ induces an IASI for the graph $G'$. The IASI, thus defined for the newly formed graph $G'$ is called an {\em induced IASI} of $G'$. 

This induction property of integer additive set-labeling holds not only for the subgraphs of an IASI graph $G$, but also for certain other graph structures associated with $G$ such as minors, topological reductions  etc. 

In the following results, we consider only the IASL of the associated graph which are induced from the IASL of the graph $G$ concerned. 

By an {\em edge contraction} in $G$, we mean an edge, say $e$, is removed and its two incident vertices, $u$ and $v$, are merged into a new vertex $w$, where the edges incident to $w$ each correspond to an edge incident to either $u$ or $v$. 

The existence of an IASI for the graph obtained by contracting finite edges of an IASL graph $G$ is established in the following theorem. 

\begin{proposition}\label{P-GEC}
\cite{GS0} Let $G$ be an IASI graph and let $e$ be an edge of $G$. Then, $G\circ e$ admits an IASI.
\end{proposition}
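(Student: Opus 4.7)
The plan is to construct an induced IASI on the contracted graph $G \circ e$ by retaining the existing labels at the unaffected vertices of $G$ and assigning a fresh singleton label to the merged vertex. Write $e = uv$ and let $w$ denote the vertex of $G \circ e$ formed by identifying $u$ and $v$. Define $g : V(G \circ e) \to \mathcal{P}(\mathbb{N}_0)$ by $g(x) = f(x)$ for every $x \in V(G) \setminus \{u,v\}$ and $g(w) = \{N\}$, where $N \in \mathbb{N}_0$ is to be chosen sufficiently large to avoid all collisions. The singleton choice is convenient because it makes each sumset $g(w) + f(x) = \{N + a : a \in f(x)\}$ a simple translate of $f(x)$, which transports the injectivity of $f$ directly into injectivity of the new edge labels.

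The next step is to verify vertex-injectivity of $g$. The labels $g(x)$ for $x \neq w$ are pairwise distinct because $f$ is injective on $V(G)$, so it suffices to ensure $\{N\}$ differs from every $f(x)$; this is achieved by taking $N$ strictly greater than every element appearing in the finite union $U := \bigcup_{x \in V(G)} f(x)$. For edge-injectivity of $g^+$, I would split the edges of $G \circ e$ into the family $\mathcal{F}_1$ of edges $yz$ with $y, z \neq w$ (inherited unchanged from $G$, so $g^+(yz) = f^+(yz)$) and the family $\mathcal{F}_2$ of edges $wx$ arising from edges of $G$ incident to $u$ or $v$ other than $e$ itself. Distinctness within $\mathcal{F}_1$ follows directly from $f$ being an IASI on $G$. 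For two edges $wx, wy \in \mathcal{F}_2$, the labels $g^+(wx) = \{N + a : a \in f(x)\}$ and $g^+(wy) = \{N + a : a \in f(y)\}$ are translates of $f(x)$ and $f(y)$ by the same integer $N$; since such a translation is a bijection of $\mathbb{N}_0$, equality forces $f(x) = f(y)$, and hence $x = y$ by vertex-injectivity of $f$.

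The principal obstacle is the cross-family comparison: ruling out that $g^+(wx) = g^+(yz)$ for some $wx \in \mathcal{F}_2$ and $yz \in \mathcal{F}_1$, since these labels come from unrelated computations and have no a priori reason to differ. Here the singleton trick pays off. Every element of $g^+(wx)$ is at least $N$, while every element of any $g^+(yz)$ with $y,z \ne w$ is at most $M := 2 \max U$. Imposing the single additional condition $N > M$ makes the two sets disjoint as subsets of $\mathbb{N}_0$ and therefore unequal. Both constraints on $N$, namely $N > \max U$ and $N > M$, are satisfiable because $U$ is a finite set of non-negative integers; with such an $N$ fixed, $g$ is vertex-injective and $g^+$ is edge-injective, and so $g$ is the desired IASI on $G \circ e$.
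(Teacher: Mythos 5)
Your proof is correct, but it takes a different route from the one the paper intends. The statement appears in a section whose stated convention is that the associated graph carries an \emph{induced} IASI: ``the newly introduced elements assume the same set-label of the corresponding deleted elements of $G$.'' So the paper's construction (as in the cited source and in the analogous subdivision result, Proposition~\ref{P-IASI3b}) gives the merged vertex $w$ the set-label $f^{+}(e)=f(u)+f(v)$ of the contracted edge, whereas you assign a fresh singleton $\{N\}$ with $N$ large. What your version buys is a complete and easily checkable injectivity argument: the translate structure $g^{+}(wx)=N+f(x)$ transfers vertex-injectivity of $f$ to the new edge labels, and the magnitude condition $N>2\max U$ disposes of all cross-comparisons at once. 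The induced version preserves the paper's philosophy (and is what the proposition is implicitly claiming), but it leaves real work unaddressed: one must still rule out $f(u)+f(v)$ coinciding with some existing vertex label and rule out collisions among the new edge labels $\bigl(f(u)+f(v)\bigr)+f(x)$ and between these and the old ones, none of which is automatic. Since the proposition as literally stated only asks that $G\circ e$ admit \emph{an} IASI, your proof establishes it in full; just be aware that it does not produce the induced labeling the surrounding discussion is about, and note the small wording slip that translation by $N$ is an injection of $\mathbb{N}_0$ into itself, not a bijection of $\mathbb{N}_0$ --- injectivity is all you need there.
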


An undirected graph $H$ is called a {\em minor} of a given graph $G$ if $H$ can be formed from $G$ by deleting edges and vertices and by contracting edges.

\begin{proposition}
Let $G$ be an IASI graph and let $H$ be a  minor of $G$. Then, $H$ admits an IASI.
\end{proposition}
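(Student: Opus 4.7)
The plan is to reduce the statement to the two preceding propositions by peeling off the operations that define a minor one at a time. By definition, any minor $H$ of $G$ is obtained from $G$ by a finite sequence of three kinds of operations: vertex deletions, edge deletions, and edge contractions. Proposition \ref{P-IASI2} tells us that deleting a vertex or an edge preserves the existence of an IASI (since the resulting graph is a subgraph of $G$), while Proposition \ref{P-GEC} tells us that contracting a single edge also preserves the existence of an IASI. Thus the natural strategy is a straightforward induction on the length of such a sequence of operations.

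More precisely, I would fix a sequence $G=G_0,G_1,G_2,\ldots,G_k=H$ of intermediate graphs such that each $G_{i+1}$ is obtained from $G_i$ by one of the three elementary operations above. The base case is trivial: $G_0=G$ admits an IASI by hypothesis. For the inductive step, assuming $G_i$ admits an IASI $f_i$, I would split into cases according to which operation produces $G_{i+1}$ from $G_i$. If $G_{i+1}$ arises by deletion of a vertex or an edge, then $G_{i+1}$ is a subgraph of $G_i$, and Proposition \ref{P-IASI2} furnishes an induced IASI $f_{i+1}$, namely the restriction of $f_i$. If $G_{i+1}=G_i\circ e$ for some edge $e$ of $G_i$, then Proposition \ref{P-GEC} directly gives an IASI on $G_{i+1}$. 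In either case, $G_{i+1}$ admits an IASI, completing the induction and yielding an IASI on $H=G_k$.

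There is no real obstacle here beyond the bookkeeping of identifying each stage of the minor-construction sequence with the correct preceding proposition; all the work has already been done in Propositions \ref{P-IASI2} and \ref{P-GEC}. The only mild subtlety is to ensure that when Proposition \ref{P-GEC} is invoked, the set-labels on the unaffected vertices of $G_{i+1}$ remain compatible with those given by $f_i$, so that the IASIs obtained across successive steps can in fact be viewed as an induced IASI of $H$ from the original IASI of $G$; but this is exactly the induced-IASI convention adopted in the preceding discussion.
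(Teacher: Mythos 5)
Your argument is correct and is exactly the one the paper intends: the paper simply remarks that the result is an immediate consequence of Propositions \ref{P-IASI2} and \ref{P-GEC}, and your induction on the sequence of deletions and contractions is just a careful spelling-out of that same reduction.
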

\noindent The proof of this result is an immediate consequence of Proposition \ref{P-IASI2} and \ref{P-GEC}.

Let $G$ be a connected graph and let $v$ be a vertex of $G$ with $d(v)=2$. Then, $v$ is adjacent to two vertices $u$ and $w$ in $G$. If $u$ and $v$ are non-adjacent vertices in $G$, then delete $v$ from $G$ and add the edge $uw$ to $G-\{v\}$. This operation is called an {\em elementary topological reduction} on $G$. 
Then, we have

\begin{proposition}\label{P-IASI3a}
\cite{GS0} Let $G$ be a graph which admits an IASI. Then any graph $G'$, obtained by applying finite number of elementary topological reductions on $G$, admits an IASI. 
\end{proposition}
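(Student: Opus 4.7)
The plan is to proceed by induction on the number $k$ of elementary topological reductions carried out on $G$, so it suffices to handle a single reduction. Let $G$ admit an IASI $f$, let $v \in V(G)$ be a degree-$2$ vertex with neighbors $u$ and $w$ (where $uw \notin E(G)$), and let $G' = (G - v) + uw$.

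My first step would be to take the natural candidate labeling $f' = f\bigr|_{V(G) \setminus \{v\}}$ on $V(G')$. Vertex injectivity is inherited directly from $f$. The induced edge function $(f')^+$ agrees with $f^+$ on each edge of $G$ surviving in $G'$, so the collection of "old" edge labels is pairwise distinct. The only genuinely new edge of $G'$ is $uw$, whose set-label is forced to be $f(u)+f(w)$. Thus the entire question reduces to showing $f(u)+f(w) \ne f^+(e)$ for every other edge $e$ of $G'$.

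The main obstacle is precisely this last step: since $uw$ was not an edge of $G$, the IASI condition on $f$ imposes no constraint on the sum set $f(u)+f(w)$, and a coincidence with some existing edge label cannot be ruled out a priori. To resolve this, I would invoke the flexibility afforded by Theorem~\ref{T-IASI1a} and replace $f(u)$ by a suitable translate $\{N + a : a \in f(u)\}$ for an integer $N$ chosen large enough that (i) the translated labeling is still an IASI on $G$ — the edge labels incident to $u$ are uniformly shifted by $N$, which for large $N$ keeps them mutually distinct and lying above all unshifted edge labels; and (ii) every element of $(N+f(u))+f(w)$ exceeds every integer appearing in the labels of the surviving edges of $G'$ not incident to $u$. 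For the finitely many surviving edges $ux$ still incident to $u$, a small further modification of one element of $f(u)$ suffices to separate $(N+f(u))+f(w)$ from $(N+f(u))+f(x)$, since only finitely many values of $N$ are forbidden. After this adjustment, $f'$ is an IASI on $G'$, and iterating through the $k$ reductions completes the argument.
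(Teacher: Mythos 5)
Your overall strategy --- induct on the number of reductions, restrict $f$ to $V(G)\setminus\{v\}$, and assign the new edge $uw$ the label $f(u)+f(w)$ --- is exactly the induced labeling intended here (the paper gives no proof of this proposition, citing \cite{GS0}, and in the surrounding ``induced IASI'' discussion it simply takes for granted that such restrictions work). You are right, and more careful than the source, to isolate the one genuine difficulty: the IASI condition on $G$ imposes nothing on the non-edge $uw$, so $f(u)+f(w)$ may coincide with an existing edge label.

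Your repair, however, does not close that gap. Translating $f(u)$ to $N+f(u)$ shifts \emph{every} edge label at $u$ by the same $N$; it separates the labels at $u$ from the unshifted labels elsewhere in $G'$, but it can never separate $(N+f(u))+f(w)$ from $(N+f(u))+f(x)$ for another neighbour $x$ of $u$, since these sets are equal for every $N$ exactly when $f(u)+f(w)=f(u)+f(x)$. That coincidence can genuinely occur with pairwise distinct vertex labels: take $f(u)=\{0,1\}$, $f(w)=\{0,2\}$, $f(x)=\{0,1,2\}$, so that $f(u)+f(w)=f(u)+f(x)=\{0,1,2,3\}$. Hence ``only finitely many values of $N$ are forbidden'' is aimed at the wrong coincidence, and the entire weight of the proof falls on the fallback clause ``a small further modification of one element of $f(u)$ suffices,'' which is asserted rather than proved: altering an element of $f(u)$ changes every sum set $f(u)+f(\cdot)$ in a way that depends on the internal additive structure of the labels, and you must re-establish all the pairwise distinctness conditions at $u$ (including against each other and against the rest of $G'$) simultaneously. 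That step needs an actual argument --- for instance, a counting argument showing a suitable replacement label for $u$ exists, in the spirit of the proof of Theorem~\ref{T-IASI1a}. Note also that if one does not insist the IASI of $G'$ be induced by $f$, the proposition is immediate from Theorem~\ref{T-IASI1a}; the content lies entirely in the induced case, which is precisely where your argument is incomplete.
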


A {\em subdivision} or an {\em expansion} of a graph is the reverse operation of an elementary topological reduction. Under this operation, we introduce a new vertex, say $w$, to an edge $uv$ in $G$. Thus, the vertex $w$ replaces the edge $uv$ and two edges $uw$ and $wv$ are introduced to $G-uv\cup \{w\}$. Two graphs $G$ and $G'$ are said to be {\em homeomorphic} if there is a graph isomorphism from some subdivision of $G$ to some subdivision of $G'$. 

The following theorem establishes the existence of an induced IASL for a graph $G'$ which is homeomorphic to a given IASL graph $G$.

\begin{proposition}\label{P-IASI3b}
Let $G$ be an IASL (or IASI) graph. Then, any subdivision $G'$ of $G$ admits an (induced) IASL (or IASI).
\end{proposition}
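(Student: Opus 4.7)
The plan is to extend the given IASL (or IASI) $f$ of $G$ to the subdivision vertices of $G'$ while keeping the set-labels of all original vertices unchanged. Since a general subdivision is obtained by iterating the elementary subdivision step, namely replacing an edge $uv$ by a two-path through a new vertex $w$, induction on the number of such steps reduces the problem to the case of inserting a single subdivision vertex.

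In the IASL case the verification is immediate, because an IASL only requires the vertex map to be injective. One simply chooses any set of non-negative integers that is not already used as a set-label and assigns it to $w$; the induced edge map is automatically defined as a sum set of non-negative integers on each of the two new edges $uw$ and $wv$, so the extended labeling is an IASL of $G'$.

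For the IASI case one also needs $f^+$ to remain injective on $E(G')$, and this is where the construction needs care. I would set $f(w) = \{N\}$, a singleton, with $N$ chosen so that (i) $\{N\}$ is distinct from every existing vertex set-label, and (ii) $N$ strictly exceeds every integer appearing in any edge sum of $G$. Only finitely many values of $N$ are forbidden, so such an $N$ exists. Condition (i) preserves injectivity of $f$. Under (ii), every element of $f(u)+\{N\}$ and of $f(v)+\{N\}$ lies above every element of every old edge set-label, so the two new edge labels cannot coincide with any old one. Finally, $f(u)+\{N\}$ and $f(v)+\{N\}$ are translates of $f(u)$ and $f(v)$ by $N$, so they are equal only if $f(u)=f(v)$, which is excluded by injectivity of $f$; hence the two new edges carry distinct set-labels.

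The main obstacle is precisely this last point in the IASI case, namely separating the two new edge labels simultaneously from each other and from every old edge label. Using a singleton for $f(w)$ makes all three required separations (vertex labels; new-versus-old edge labels; new-versus-new edge labels) follow from a single inequality on $N$, and iterating with strictly increasing singleton values handles any number of subdivisions without further conflict, producing an induced IASI on $G'$.
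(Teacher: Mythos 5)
Your construction is correct, but it is genuinely different from the paper's. The paper keeps $f$ on the old vertices and assigns to the new vertex $w$ the set $g(w)=f^{+}(uv)=f(u)+f(v)$, i.e.\ it recycles the set-label of the subdivided edge as the label of the vertex that replaces it; this is the sense in which the resulting labeling is ``induced'' by $f$. You instead assign $w$ a fresh singleton $\{N\}$ with $N$ larger than every integer occurring in any existing edge label. The trade-off is instructive: the paper's choice is canonical and requires no new data, but its claim that ``clearly $g$ is an IASL'' glosses over exactly the points you isolate --- $f(u)+f(v)$ could coincide with an existing vertex label, and the two new edge labels $f(u)+g(w)$ and $f(v)+g(w)$ are not obviously distinct from each other or from the surviving old edge labels, so for the IASI case the paper's proof as written has an unverified step. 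Your large-singleton choice resolves all three injectivity requirements (vertex versus vertex, new edge versus old edge, new edge versus new edge) with one inequality plus the translate argument, and your remark about iterating with strictly increasing values of $N$ correctly handles multiple subdivisions, including repeated subdivision of the same edge. In short, you pay with a non-canonical choice and gain a complete proof of the IASI case.
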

\begin{proof}
Let $f$ be an IASL defined on a given graph $G$ and let $u,v$ be two adjacent vertices in $G$. Introduce a new vertex $w$ on the edge $uv$ so that $uv$ is subdivided to two edges $uw$ and $wv$. Let $G'=[G-(uv)]\cup \{uw, vw\}$. Define a function $g:V(G')\to \mathcal{P}(\mathbb{N}_0)$ such that $g(v)= f(v)$ if $v\in V(G)$ and $g(w)=f^+(uv)$.
Clearly, $g$ is an IASL on $G'$. This result can be extended to a graph obtained by finite number of subdivisions on $G$. There fore, any subdivision of an IASL graph admits an (induced) IASL.
\end{proof}

Invoking Proposition \ref{P-IASI3b}, the admissibility of an IASL by a graph that is homeomorphic to a given IASL graph. 

\begin{proposition}
Let $G$ be an IASL (or IASI) graph and let $H$ be a  a graph that is homeomorphic to $G$. Then, $H$ admits an IASL (or IASI).
\end{proposition}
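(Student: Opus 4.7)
The plan is to reduce this statement to the two preceding propositions, namely Proposition~\ref{P-IASI3a} (elementary topological reductions preserve IASL admissibility) and Proposition~\ref{P-IASI3b} (subdivisions of IASL graphs are IASL graphs). The definition of homeomorphism recalled just before the statement supplies a common intermediate graph, and the two propositions let us pass to and from it.

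First, I would unpack the hypothesis. Since $G$ and $H$ are homeomorphic, there exist a subdivision $G'$ of $G$ and a subdivision $H'$ of $H$ together with a graph isomorphism $\varphi:G'\to H'$. As $G$ admits an IASL $f$, Proposition~\ref{P-IASI3b} produces an induced IASL $g$ on $G'$, obtained by labelling each subdivision vertex by the sum set of the set-labels of the endpoints of the edge it replaces. Next, I would transport $g$ across $\varphi$: set $h=g\circ\varphi^{-1}$. Since $\varphi^{-1}$ is a bijection on vertex sets and induces a bijection on edge sets, and since $g$ and $g^+$ are injective, both $h$ and its associated $h^+$ are injective, so $h$ is an IASL of $H'$.

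Finally, I would reverse the subdivisions from $H'$ down to $H$. Every vertex of $H'$ that is not already a vertex of $H$ arose from subdividing an edge of $H$, hence has degree $2$ and its two neighbours in $H'$ are non-adjacent (they were separated precisely by the insertion of that vertex). Suppressing such a vertex is therefore an elementary topological reduction in the sense defined in the paper, and $H$ is obtained from $H'$ by finitely many such reductions. Applying Proposition~\ref{P-IASI3a} then yields an IASL on $H$. The IASI case is identical, invoking the IASI clauses of the two propositions.

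The only real obstacle is bookkeeping: one must verify that at every stage of the reverse-subdivision process the non-adjacency hypothesis for an elementary topological reduction is still satisfied. This is automatic here because the vertices being suppressed are precisely those introduced by subdivisions, so the step is merely a matter of noting that the operations commute with one another and that Proposition~\ref{P-IASI3a} already permits a finite iteration.
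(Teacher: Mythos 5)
Your proposal is correct and follows essentially the same route as the paper: pass from $G$ to its subdivision $G'$ via Proposition~\ref{P-IASI3b}, transport the labeling across the isomorphism $G'\cong H'$, and then descend from $H'$ to $H$ by finitely many elementary topological reductions using Proposition~\ref{P-IASI3a}. Your extra remark verifying the non-adjacency hypothesis for the suppressed degree-$2$ vertices is a detail the paper's proof passes over silently, but it does not change the argument.
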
 
\begin{proof}
Assume that the graphs $G$ and $H$ are homeomorphic graphs. Let $G'$ and $H'$ be the isomorphic subdivisions of $G$ and $H$ respectively. Let $f$ be an IASL defined on $G$. By Proposition \ref{P-IASI3b}, the subdivision $G'$ admits an IASL, say $f'$ induced by $f$. Since $G'\cong H'$, define a function $g:V(H')\to \mathcal{P}(\mathbb{N}_0)$ by $g(v')=f(v)$, where $v'$ is the vertex in $H'$ corresponding to the vertex $v$ of $G'$. That is, $g$ assigns the same set-labels of the vertices of $G'$ to the corresponding vertices of $H'$. Therefore, $g(V(H'))=f(V(G'))$. Since $H$ can be considered as a graph obtained by applying a finite number of topological reductions on $H'$, by Proposition \ref{P-IASI3a}, $g$ induces an IASL on $H$. Therefore, $H$ is an IASL graph.
\end{proof}

The existence of integer additive set-labelings for certain other graphs associated to the given graph $G$, which are not obtained by replacing some elements of $G$ by certain other elements not in $G$, have been established in \cite{GS0}. 

An important graph of this kind which is associated to a given graph $G$ is the line graph of $G$ which is defined as follows.

For a given graph $G$, its {\em line graph} $L(G)$ is a graph such that  each vertex of $L(G)$ represents an edge of $G$ and two vertices of $L(G)$ are adjacent if and only if their corresponding edges in $G$ incident on a common vertex in $G$. The following theorem establishes the existence of IASI by the line graph of a graph.

\begin{theorem}\label{T-IASI-LG}
\cite{GS0} If a graph $G$ admits IASI, say $f$, then its line graph $L(G)$ also admits an (induced) IASI.
\end{theorem}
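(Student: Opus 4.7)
The natural approach is to transfer the edge set-labels of $G$ directly to the vertex set-labels of $L(G)$. I would define $g: V(L(G)) \to \mathcal{P}(\mathbb{N}_0)$ by $g(e) = f^+(e)$ for each $e \in E(G)$, using the usual identification of $E(G)$ with $V(L(G))$. Injectivity of $g$ on $V(L(G))$ is then immediate from the hypothesis that $f$ is an IASI on $G$, which forces $f^+$ to be injective on $E(G)$, so distinct vertices of $L(G)$ automatically receive distinct set-labels.

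The substantive step, and the one I expect to be the main obstacle, is to show that the associated function $g^+$ is injective on $E(L(G))$. Each edge of $L(G)$ corresponds to an unordered pair $\{e_i, e_j\}$ of edges of $G$ sharing a common vertex in $G$, and for such a pair one has $g^+(e_ie_j) = f^+(e_i) + f^+(e_j)$. Injectivity of $g^+$ thus demands $f^+(e_i) + f^+(e_j) \ne f^+(e_r) + f^+(e_s)$ whenever $\{e_i,e_j\}$ and $\{e_r,e_s\}$ are distinct edges of $L(G)$. This condition is not automatic from the IASI property of $f$ alone, since distinct pairs of sum-sets can in principle have equal sum-sets; indeed one can exhibit small path examples in which the induced vertex labels on $L(G)$ are distinct but two of the edge labels on $L(G)$ coincide.

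My plan for overcoming this obstacle is to imitate the cardinality-based construction used in the proof of Theorem~\ref{T-IASI1a}: choose the vertex set-labels of $G$ so that the set-indexing numbers $|f^+(e)|$ on edges of $G$ grow quickly enough to separate the further cardinalities $|f^+(e_i)+f^+(e_j)|$ across the edges of $L(G)$. Pairwise distinct cardinalities of the sum-sets attached to edges of $L(G)$ would force the sets themselves to be distinct, making $g^+$ injective and confirming that $g$ is an induced IASI on $L(G)$. If such a cardinality refinement is unavailable for the particular $f$ given, a straightforward fallback is to invoke Theorem~\ref{T-IASI1a} directly on the graph $L(G)$, which yields an IASI on $L(G)$, albeit no longer induced from $f$ in the strict sense.
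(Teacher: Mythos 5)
The paper states this theorem only as a citation to \cite{GS0} and reproduces no proof, so there is nothing internal to compare you against line by line; your opening move, labeling each vertex $e$ of $L(G)$ by $g(e)=f^{+}(e)$, is certainly the intended ``induced'' construction, and you are right that injectivity of $g$ on $V(L(G))$ is immediate from injectivity of $f^{+}$. You have also correctly isolated the real difficulty: $g^{+}(e_ie_j)=f^{+}(e_i)+f^{+}(e_j)$ need not be injective just because $f^{+}$ is, since $A+C=B+C$ can hold for sum sets with $A\ne B$.

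The gap is that you never actually close this. Your primary plan --- re-choosing the vertex labels so that the cardinalities $|f^{+}(e_i)+f^{+}(e_j)|$ separate --- is left as a slogan: ``grow quickly enough'' is not quantified, and Lemma~\ref{L-0} only traps $|A+B|$ between $\max(|A|,|B|)$ and $|A|\,|B|$, so distinct cardinalities of the $f^{+}(e_i)$ do not by themselves force distinct cardinalities of their pairwise sum sets; worse, two adjacent edges of $L(G)$ correspond to sums sharing a common summand $f^{+}(e)$, so you cannot hope to reach the multiplicative upper bound there. A concrete way to finish in your spirit is to take all vertex labels of $G$ to be singletons $f(v_i)=\{4^{i}\}$: then every edge of $L(G)$ joining $uv$ and $uw$ receives the singleton $\{2\cdot 4^{u}+4^{v}+4^{w}\}$, and since all digits in base $4$ stay below $4$, these integers determine the multiset $\{u,u,v,w\}$ and hence are pairwise distinct, giving an induced IASI on $L(G)$. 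Note finally that both of your escape routes quietly weaken the statement: re-choosing $f$ shows only that \emph{some} IASI of $G$ induces an IASI of $L(G)$, not that the given $f$ does, and invoking Theorem~\ref{T-IASI1a} on $L(G)$ directly abandons the word ``induced'' altogether. As written, the proposal identifies the right obstacle but does not prove the theorem.
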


Another graph that is associated to a given graph is its total graph which is defined as follows.

The {\em total graph} of a graph $G$ is the graph, denoted by $T(G)$, is the graph having the property that a one-to one correspondence can be defined between its points and the elements (vertices and edges) of $G$ such that two points of $T(G)$ are adjacent if and only if the corresponding elements of $G$ are adjacent (either  if both elements are edges or if both elements are vertices) or they are incident (if one element is an edge and the other is a vertex). 

The following theorem establishes the existence of an IASI for the total graph of a graph.

\begin{theorem}\label{T-IASI-TG}
\cite{GS0} If a graph $G$ admits IASI, say $f$, then its total graph $T(G)$ also admits an (induced) IASI.
\end{theorem}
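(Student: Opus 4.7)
The plan is to transfer the IASI $f$ from $G$ to an induced labeling $g$ on $T(G)$ via the natural bijection $V(T(G)) \leftrightarrow V(G) \cup E(G)$. Concretely, I would set
\[
g(x) \;=\; \begin{cases} f(x) & \text{if $x$ corresponds to a vertex of $G$,}\\ f^+(x) & \text{if $x$ corresponds to an edge of $G$,} \end{cases}
\]
and let $g^+$ be the associated map on $E(T(G))$ sending each edge to the sum set of its two end-vertex labels. The task then splits into two verifications: injectivity of $g$ on $V(T(G))$, and injectivity of $g^+$ on $E(T(G))$.

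For the first, injectivity of $f$ on $V(G)$ handles collisions among vertex-type labels, and injectivity of $f^+$ (built into the definition of an IASI) handles collisions among edge-type labels; only cross-type collisions $f(v)=f^+(e)$ remain. The freedom guaranteed by Theorem \ref{T-IASI1a} lets us choose $f$ at the outset so that no such coincidence occurs (for instance by taking every $f(v)$ to be a singleton while arranging that no $f^+(e)$ is a singleton). For the second verification I would split $E(T(G))$ into three classes: (i) edges between two original vertices, (ii) edges between two original edges, and (iii) vertex-edge incidence edges. On class (i), $g^+$ reproduces $f^+$ and is injective by hypothesis; on class (ii), injectivity is precisely the conclusion of Theorem \ref{T-IASI-LG} applied to the induced labeling of $L(G)$; on class (iii), I need distinctness of the sums $f(v)+f^+(e)$ over incident pairs $(v,e)$.

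The main obstacle is ruling out cross-class collisions between the three families of sum sets that $g^+$ produces, together with the class-(iii) internal injectivity, since these are not controlled by the hypothesis on $f$ alone. My strategy would be to impose an arithmetic separation on the integers appearing in the initial labels of $G$—say, by replacing $f$ with a modified IASI (still guaranteed to exist by Theorem \ref{T-IASI1a}) whose entries lie in widely spaced scales, so that the sum sets of vertex-vertex pairs, edge-edge pairs, and vertex-edge pairs occupy disjoint ranges of $\mathbb{N}_0$. Once such a choice is in place, all three classes of $g^+$-values are automatically distinct, and within class (iii) distinctness reduces to injectivity of $f$ combined with injectivity of $f^+$. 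Making this separation argument precise (and checking it is always achievable for a finite graph) is the delicate step; the rest is bookkeeping on the three edge types of $T(G)$.
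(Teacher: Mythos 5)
The paper itself states Theorem~\ref{T-IASI-TG} as a result cited from \cite{GS0} and supplies no proof here, so there is nothing to compare your argument against line by line; it has to stand on its own. As it stands it does not: you have correctly identified the natural induced labeling $g$ (vertices of $T(G)$ inherit $f(v)$ or $f^+(e)$), the three classes of edges of $T(G)$, and exactly where the difficulties lie, but the step that actually carries the proof --- the ``arithmetic separation'' ensuring that the vertex--vertex, vertex--edge and edge--edge sum sets are pairwise distinct, and that $f(v)\neq f^+(e)$ for all $v,e$ --- is announced as a strategy rather than executed. You say yourself that making it precise is ``the delicate step''; that step \emph{is} the theorem, so what you have is a correct plan with the proof missing. (For the record, the separation is achievable: label $v_i$ with the singleton $\{4^i\}$; then every label and every induced sum set has a distinct base-$4$ digit pattern, the three edge classes of $T(G)$ are distinguished by whether the pattern is $1+1$, $2+1$ or $2+1+1$, and injectivity within each class follows from uniqueness of the index multiset. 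Writing this, or any equivalent scheme, out is what the argument still needs.)

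There is also one concrete internal error: you propose avoiding cross-type collisions ``by taking every $f(v)$ to be a singleton while arranging that no $f^+(e)$ is a singleton.'' That is impossible --- the sum set of two singletons is always a singleton, so if every vertex label is a singleton then so is every edge label. The collisions $f(v)=f^+(e)$ can still be ruled out with singleton labels (e.g.\ $4^i$ is never of the form $4^j+4^k$), but the mechanism you name cannot be the reason. Fix that parenthetical and supply the explicit separated labeling, and the rest of your bookkeeping on the three edge types of $T(G)$ goes through.
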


So far, we discussed about the integer additive set-labeling of graphs with respect to the countably infinite set $\mathbb{N}_0$. Can we choose a finite set $X$ as the ground set for labeling the vertices of a graph $G$? This is possible only when the ground set $X$ has sufficiently large cardinality. Hence, the study about the cardinality of the ground set $X$ arises much interest.

The minimum cardinality of the ground set that is required for set-labeling a graph $G$ is called the {\em set-indexing number} of $G$. The following theorem determines the set-indexing number of a given graph $G$.  

\begin{theorem}\label{P-FinSet}
\cite{GS0} Let $X$ be a finite set of non-negative integers and let $f:V(G)\to 2^X-\{\emptyset\}$ be an IASI on $G$, which has $n$ vertices. Then, $X$ has at least $\lceil log_2(n+1)\rceil$ elements.
\end{theorem}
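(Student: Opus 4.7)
The plan is a direct counting argument based on the injectivity of $f$ restricted to the vertex set.

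First I would observe that, by the definition of an IASI, the vertex labeling $f:V(G)\to 2^X-\{\emptyset\}$ is required to be injective (the edge-labeling being injective is an additional constraint that we do not need here). Hence the image $f(V(G))$ is an $n$-element subset of the family of nonempty subsets of $X$, which means the cardinality of that family must be at least $n$.

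Next I would count: the family of nonempty subsets of a finite set $X$ has exactly $2^{|X|}-1$ elements. Therefore the injectivity of $f$ forces the inequality $2^{|X|}-1\ge n$, equivalently $2^{|X|}\ge n+1$. Taking logarithms in base $2$ and using that $|X|$ is an integer gives $|X|\ge \lceil \log_2(n+1)\rceil$, which is the desired bound.

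There is no real obstacle; the only thing worth flagging is that the bound uses only the injectivity of $f$ on $V(G)$, not any property peculiar to the sum-set operation on edges. In particular the same lower bound applies to every set-valuation (injective set-labeling), so the theorem can be viewed as an information-theoretic floor coming from the size of the codomain $2^X-\{\emptyset\}$. The argument is short enough that a single paragraph in the write-up suffices.
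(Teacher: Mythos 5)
Your argument is correct and is the standard counting proof of this bound: injectivity of $f$ on the $n$ vertices forces $n\le 2^{|X|}-1$, hence $|X|\ge\lceil\log_2(n+1)\rceil$. The paper only cites this result without reproducing a proof, but the cited source's argument is exactly this pigeonhole count on the nonempty subsets of $X$, so your approach is essentially the same.
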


The following theorem is  special case of the above theorem when the vertices of $G$ have the same set-indexing number.

\begin{theorem}\label{P-UniSet}
\cite{GS0} Let $X$ be a finite set of non-negative integers and let $f:V(G)\to 2^X-\{\emptyset\}$ be an IASI on $G$, which has $n$ vertices, such that $V(G)$ is $l$-uniformly set-indexed. Then, $n\le \binom{|X|}{l}$.
\end{theorem}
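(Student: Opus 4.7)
The plan is to argue by a direct counting (pigeonhole-type) argument, exploiting the two defining features of the hypothesis: that $f$ is injective as a set-labeling, and that every vertex-label is an $l$-element subset of $X$. No property of the induced edge-function $f^+$ actually needs to be invoked; the bound is a purely vertex-side counting bound.

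First I would recall that, by the definition of an IASI (Definition \ref{DEFN-1}), the vertex-labeling $f:V(G)\to\mathcal{P}(\mathbb{N}_0)$ is injective. Next, I would translate the $l$-uniformly set-indexed hypothesis into the statement that $|f(v)|=l$ for every $v\in V(G)$, so that the image of $f$ is contained in the collection of $l$-element subsets of $X$, namely $\{A\subseteq X : |A|=l\}$. Note that because $l\ge 1$ (otherwise $f(v)=\emptyset$ would be forbidden by $f:V(G)\to 2^X-\{\emptyset\}$), this collection coincides with the set of $l$-subsets of $X$, whose cardinality is $\binom{|X|}{l}$.

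Combining these two observations, injectivity of $f$ gives $n=|V(G)|=|f(V(G))|$, and the containment of $f(V(G))$ in the family of $l$-subsets of $X$ gives $|f(V(G))|\le \binom{|X|}{l}$. Chaining these two inequalities yields $n\le \binom{|X|}{l}$, which is the required bound.

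There is no real obstacle here: the result is essentially a restatement of the fact that an injection from an $n$-set into the family of $l$-subsets of $X$ forces $n\le \binom{|X|}{l}$. The only subtlety worth flagging explicitly in the write-up is that the hypothesis ``$V(G)$ is $l$-uniformly set-indexed'' is used to constrain the codomain of $f$ from all of $2^X-\{\emptyset\}$ (of size $2^{|X|}-1$) down to the much smaller family of $l$-subsets, and this is precisely what tightens the previous theorem (Theorem \ref{P-FinSet}), which gave the weaker bound $|X|\ge \lceil\log_2(n+1)\rceil$ without the uniformity assumption.
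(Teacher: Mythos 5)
Your proof is correct, and the counting argument (injectivity of $f$ plus the fact that uniformity restricts the image to the $\binom{|X|}{l}$ subsets of $X$ of size $l$) is the natural and essentially only route to this bound; the paper itself states the theorem without proof, merely citing \cite{GS0} and describing it as a special case of Theorem~\ref{P-FinSet}, and your argument is exactly the refinement of that theorem's pigeonhole reasoning that the uniformity hypothesis permits.
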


We have discussed the cardinality of the ground set $X$ for labeling the vertices of a given graph so that it admits an IASL. The cardinality of the set-labels of the elements of $G$ is also worth studying. Some studies in this area are reviewed in the following section.
 
\section{Cardinality of the Set-Labels of IASL Graphs}

Certain studies on set-indexing numbers of the elements of the IASI-graphs have been done in \cite{GS1}, \cite{GS2} and \cite{GS0}. The set-theoretic foundations of the IASLs , established in these studies, are reviewed in this section.

Let $A$ and $B$ be two non-empty sets of non-negative integers. Then the ordered pairs $(a,b)$ and $(c,d)$ in $A\times B$ is said to be {\em compatible} if $a+b=c+d$. If $(a,b)$ and $(c,d)$ are compatible, then we write $(a,b)\sim (c,d)$. Clearly, $\sim$ is an equivalence relation. 

A {\em compatible class} with respect to an integer $k$ is the subset of $A\times B$ defined by $\{(a,b)\in A\times B:a+b=k\}$ and is denoted by $\mathsf{C}_k$. 

All the compatibility classes need not have the same number of elements but can not exceed a certain number. It can be noted that exactly one representative element of each compatibility class $\mathsf{C}_k$ of $f(u)\times f(v)$ contributes an element to the set-label of the corresponding edge $uv$ and all other $r_k-1$ elements are neglected and we call these elements {\em neglected elements} of the class $\mathsf{C}_k$. The number of neglected elements in the set-label of an edge $uv$ with respect to the set $f(u)\times f(v)$ is called the {\em neglecting number} of that edge.

The compatibility classes which contain the maximum possible number of elements are called {\em saturated classes}. The compatibility class that contains maximum elements is called a {\em maximal compatibility class}. Hence, we have

\begin{proposition}\label{P-CardCC}
\cite{GS0} The cardinality of a saturated class in $A\times B$ is $n$, where $n=\min(|A|,|B|)$. 
\end{proposition}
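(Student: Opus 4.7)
The plan is to establish the upper bound $|\mathsf{C}_k|\le \min(|A|,|B|)$ for every integer $k$, and then appeal to the definition of a saturated class. The saturated-class statement is essentially a clean restatement of this upper bound, since a saturated class is by definition one that attains the maximum possible cardinality, so the real content is proving the bound.

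The decisive observation is that on a compatibility class with common sum $k$, each coordinate determines the other: if $(a,b)\in \mathsf{C}_k$, then $b=k-a$ is forced. First I would use this to show that the projection $\pi_A\colon \mathsf{C}_k\to A$ given by $(a,b)\mapsto a$ is injective, yielding $|\mathsf{C}_k|\le |A|$. A symmetric argument with the projection onto the second coordinate gives $|\mathsf{C}_k|\le |B|$. Combining these, $|\mathsf{C}_k|\le \min(|A|,|B|)=n$ for every $k$, which is exactly the upper bound on the size of any compatibility class arising from $A\times B$.

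To confirm that a saturated class has cardinality exactly $n$ (and not merely at most $n$), I would briefly note that the bound is actually attainable in natural examples, so the notion of saturation is not vacuous; for instance, with $A=B=\{0,1,\ldots,n-1\}$ and $k=n-1$ one obtains $\mathsf{C}_{n-1}=\{(i,n-1-i):0\le i\le n-1\}$, a class of size $n$. Hence whenever a compatibility class is saturated in the sense of the definition preceding the proposition, its cardinality equals $\min(|A|,|B|)$.

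I do not foresee any genuine obstacle. The entire substance is the injective-projection argument, and the rest is simply unwinding the definition of saturation; the only mildly delicate point is being clear that the upper bound is a feature of every class in $A\times B$, while saturation records which classes realize this bound.
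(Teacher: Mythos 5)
Your argument is correct: fixing the sum $k$ forces $b=k-a$, so projection onto either coordinate is injective on $\mathsf{C}_k$, giving $|\mathsf{C}_k|\le\min(|A|,|B|)$, and a saturated class is by definition one realizing this maximum. The paper itself states Proposition~\ref{P-CardCC} as a cited result from \cite{GS0} without reproducing a proof, but the injective-projection bound you give is precisely the intended content; your closing caveat that the identity holds \emph{whenever} a saturated class exists is a sensible precaution, since for general $A$ and $B$ (e.g.\ $A=\{0,1\}$, $B=\{0,2\}$) no class need attain the bound.
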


The number of distinct compatibility classes in $A\times B$ is called the {\em compatibility index} of the pair of sets $(A,B)$ and is denoted by $\mho_{(A,B)}$. Hence, we have the following lemma.

\begin{lemma}\label{L-3}
Let $f$ be an IASI of a graph $G$ and $u,v$ be two vertices of $G$. The set-indexing number of an edge $e=uv$ of $G$ is given by $|f^+(e)|=|f(u)|+|f(v)|=\mho_{(f(u),f(v))}$
\end{lemma}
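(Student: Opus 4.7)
The plan is to unpack the three quantities appearing in the claimed equality directly from their definitions and exhibit a canonical bijection between the sum set $f(u)+f(v)$ and the collection of non-empty compatibility classes in $f(u)\times f(v)$. Reading the displayed equation as the assertion that $|f^+(e)|=|f(u)+f(v)|=\mho_{(f(u),f(v))}$, the first equality is just the definition of the induced edge map from Definition \ref{DEFN-1}, so the substantive content is the identification of $|f(u)+f(v)|$ with the compatibility index.

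First I would recall that $f^+(e)=f(u)+f(v)=\{a+b : a\in f(u),\, b\in f(v)\}$, so $|f^+(e)|$ counts the number of distinct integer values attained by the addition map $\sigma\colon f(u)\times f(v)\to \mathbb{N}_0$, $(a,b)\mapsto a+b$; equivalently, $|f^+(e)|=|\operatorname{Im}\sigma|$. Next I would invoke the compatibility relation defined just before the lemma: two pairs $(a,b),(c,d)\in f(u)\times f(v)$ are compatible precisely when $\sigma(a,b)=\sigma(c,d)$, so the compatibility classes are exactly the non-empty fibres of $\sigma$, namely $\mathsf{C}_k=\sigma^{-1}(k)$ for each $k\in\operatorname{Im}\sigma$. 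The assignment $k\mapsto\mathsf{C}_k$ is then a canonical bijection between $\operatorname{Im}\sigma$ and the set of non-empty compatibility classes, so chaining the bijections gives $|f^+(e)|=|\operatorname{Im}\sigma|=\mho_{(f(u),f(v))}$, as required.

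There is really no substantive obstacle here; the lemma is essentially a bookkeeping exercise that unwraps the definitions of the sum set, the compatibility relation and the compatibility index. The one small point that deserves a line of verification is that the compatibility index is understood to count exactly the non-empty compatibility classes (equivalently, the distinct attained sums $a+b$) rather than every integer $k$; once one fixes that convention, which is implicit in the definition since the empty class contributes no element to $f(u)+f(v)$, the identification is immediate and the proof terminates in a single line.
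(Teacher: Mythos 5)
Your proof is correct and matches the paper's intent exactly: the paper states this lemma without proof as an immediate consequence of the definition of the compatibility index, and your identification of the compatibility classes with the non-empty fibres of the addition map $(a,b)\mapsto a+b$ is precisely the definitional unpacking required. You were also right to read the middle term $|f(u)|+|f(v)|$ as a typographical slip for $|f(u)+f(v)|$; as literally printed the statement would be false (e.g.\ it contradicts Lemma~\ref{L-0} whenever both set-labels are singletons), and your corrected reading is the only one consistent with the surrounding text.
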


The following result establishes the bounds for the set-indexing number of the edges of an IASI graph, in terms of the set-indexing numbers of their end vertices.

\begin{lemma}\label{L-0}
\cite{GS1} For an IASI $f$ of a graph $G$, $\max(|f(u)|,|f(v)|) \le |f^+(uv)|= |f(u)+f(v)| \le |f(u)| |f(v)|$, where $u,v \in V(G)$.
\end{lemma}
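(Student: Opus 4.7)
The plan is to prove the two inequalities $\max(|f(u)|,|f(v)|)\le |f(u)+f(v)|$ and $|f(u)+f(v)|\le |f(u)||f(v)|$ as purely set-theoretic statements about the sumset of finite subsets of $\mathbb{N}_0$; the identity $|f^+(uv)|=|f(u)+f(v)|$ is immediate from the definition of the induced edge function $f^+$ given in Definition \ref{DEFN-1}.

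For the upper bound, I would view $f(u)+f(v)=\{a+b:a\in f(u),\,b\in f(v)\}$ as the image of the addition map $\phi:f(u)\times f(v)\to \mathbb{N}_0$, $\phi(a,b)=a+b$. Since the image of any map has cardinality at most that of its domain, one obtains $|f(u)+f(v)|\le |f(u)\times f(v)|=|f(u)||f(v)|$. This direction is purely combinatorial and requires nothing beyond the definition of a sumset.

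For the lower bound, I would assume without loss of generality that $|f(u)|\ge |f(v)|$, so that $\max(|f(u)|,|f(v)|)=|f(u)|$. Choose any fixed element $b_0\in f(v)$; such a $b_0$ exists because the injectivity requirement on $f^+$ together with Definition \ref{DEFN-1} forces the set-labels to be non-empty. Then the translate $f(u)+\{b_0\}=\{a+b_0:a\in f(u)\}$ is a subset of $f(u)+f(v)$, and since the map $a\mapsto a+b_0$ is a bijection on $\mathbb{Z}$, it is injective on $f(u)$, whence $|f(u)+\{b_0\}|=|f(u)|$. Therefore $|f(u)+f(v)|\ge |f(u)+\{b_0\}|=|f(u)|=\max(|f(u)|,|f(v)|)$.

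No real obstacle arises; the lemma is just the translation of the standard sumset cardinality bounds into IASI language. The only point requiring a little care is noting the non-emptiness of set-labels so that the translation argument in the lower bound is well-defined, and this is guaranteed by the definition of an IASI itself.
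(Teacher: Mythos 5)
Your proof is correct, and since the paper only cites this lemma from \cite{GS1} without reproducing a proof, there is nothing to diverge from: the translation argument for the lower bound and the surjection-from-the-product argument for the upper bound are exactly the standard sumset cardinality bounds this lemma encodes. Your remark about non-emptiness of the set-labels is the right small point of care, and it is consistent with the paper's conventions (e.g.\ Theorem~\ref{P-FinSet} maps into $2^X-\{\emptyset\}$).
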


\begin{theorem}\label{T-Card}
Let $f$ be an IASI of a graph $G$ and let $u$ and $v$ be two adjacent vertices of $G$. Let $|f(u)|=m$ and $|f(v)|=n$. Then, $|f^+(uv)|=mn-r$, where $r$ is the number of neglected elements in $f(u)\times f(v)$. 
\end{theorem}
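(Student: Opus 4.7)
The plan is to prove the identity by a direct double-counting argument based on partitioning $f(u)\times f(v)$ into compatibility classes. First I would note that $|f(u)\times f(v)| = mn$, and that the sum map $\sigma:f(u)\times f(v)\to \mathbb{N}_0$ defined by $\sigma(a,b) = a+b$ has image exactly $f(u)+f(v) = f^+(uv)$. Hence $|f^+(uv)|$ equals the number of distinct values taken by $\sigma$, which is the number of fibres of $\sigma$.

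Next, I would invoke the compatibility equivalence relation $\sim$ introduced just before the theorem: two pairs $(a,b)$ and $(c,d)$ in $f(u)\times f(v)$ satisfy $(a,b)\sim (c,d)$ precisely when $a+b = c+d$. The equivalence classes of $\sim$ are therefore exactly the compatibility classes $\mathsf{C}_k$, one for each value $k$ actually attained by $\sigma$, so the number $N$ of compatibility classes equals $|f^+(uv)|$.

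Finally, writing $r_k = |\mathsf{C}_k|$, the partition gives $\sum_k r_k = mn$. By the definition of neglected elements recalled in the paragraph preceding the theorem, each class $\mathsf{C}_k$ contributes exactly $r_k - 1$ neglected elements (all but one chosen representative), so the neglecting number satisfies
\[
  r \;=\; \sum_k (r_k - 1) \;=\; \Big(\sum_k r_k\Big) - N \;=\; mn - N.
\]
Since $N = |f^+(uv)|$, rearranging gives $|f^+(uv)| = mn - r$, as required. The argument is essentially a repackaging of the definitions of compatibility class and neglecting number via elementary double counting, so there is no genuine obstacle; the only point requiring a little care is keeping the per-class cardinality $r_k$ distinct from the total neglecting number $r$, which the notation already makes transparent.
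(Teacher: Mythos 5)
Your argument is correct and is exactly the computation the paper intends: the paper states this theorem without an explicit proof, treating it as immediate from the partition of $f(u)\times f(v)$ into compatibility classes (cf.\ Lemma~\ref{L-3}, which identifies $|f^+(uv)|$ with the number of such classes) and the definition of neglected elements as the $r_k-1$ non-representatives in each class. Your double count $r=\sum_k(r_k-1)=mn-N$ is precisely that intended argument, carefully written out.
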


An interesting question that arises in this context is about a necessary and sufficient condition for an IASL of a given graph $G$ to be a uniform IASL of $G$. The following result provides a platform for finding the conditions for a given graph $G$ to admit a uniform IASL.

\begin{proposition}
Two adjacent edges $e_i$ and $e_j$ of a graph $G$ have the same set-indexing number if and only if the set-indexing number of the common vertex of these edges is the quotient when the difference of the neglecting numbers of the edges is divided by the differences of set-indexing numbers of the end vertices that are not common to these edges.  
\end{proposition}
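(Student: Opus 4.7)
The plan is to invoke Theorem \ref{T-Card} twice, once for each edge, and then reduce the claimed biconditional to a single linear equation in the set-indexing numbers and neglecting numbers.

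First I would fix notation: let $u$ be the vertex shared by the two adjacent edges and write $e_i = uv$, $e_j = uw$ for the two non-common endpoints $v$ and $w$. Set $m = |f(u)|$, $n_1 = |f(v)|$, $n_2 = |f(w)|$, and let $r_1, r_2$ denote the neglecting numbers of $e_i, e_j$ respectively. Applying Theorem \ref{T-Card} to each edge yields
\begin{equation*}
|f^+(e_i)| = m n_1 - r_1, \qquad |f^+(e_j)| = m n_2 - r_2.
\end{equation*}

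Next I would equate these two expressions and rearrange. The equality $|f^+(e_i)| = |f^+(e_j)|$ is equivalent to $m(n_1 - n_2) = r_1 - r_2$, and provided $n_1 \neq n_2$, dividing yields
\begin{equation*}
m = \frac{r_1 - r_2}{n_1 - n_2},
\end{equation*}
which is exactly the verbal statement in the proposition: the set-indexing number of the common vertex equals the difference of the neglecting numbers divided by the difference of the set-indexing numbers of the non-common endpoints. The reverse direction follows by reading the algebra backwards, so the biconditional is immediate once the formula for $|f^+(e)|$ from Theorem \ref{T-Card} is in hand.

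The main obstacle is not analytical but a matter of careful formulation: the degenerate case $n_1 = n_2$. If the two non-common endpoints have the same set-indexing number, the divisor in the stated quotient vanishes, and the biconditional collapses to the condition $r_1 = r_2$. A careful write-up should either include the standing hypothesis $|f(v)| \neq |f(w)|$ so that the quotient is defined, or record the equal-cardinality case as a separate remark, so that the statement of the proposition is unambiguous.
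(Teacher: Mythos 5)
Your proposal is correct and follows essentially the same route as the paper's own proof: apply the formula $|f^+(e)|=mn-r$ from Theorem \ref{T-Card} to each edge, equate, and rearrange to $m=\frac{r_i-r_j}{n_i-n_j}$, with the converse obtained by reversing the algebra. Your additional observation about the degenerate case $n_i=n_j$ (where the stated quotient is undefined and the condition collapses to $r_i=r_j$) is a legitimate point that the paper's proof silently ignores.
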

\begin{proof}
Let $e_i=uv_i$ and $e_j=uv_j$ be two adjacent edges in the given graph $G$. Let $f$ be an IASL defined on $G$. Also, let $|f(u)|=m$, $|f(v_i)|=n_i$ and $|f(v_j)|=n_j$. Then, the set-indexing number of $e_i$ is $mn_i-r_i$ and that of $e_j$ is $mn_j-r_j$ where $r_i$ and $r_j$ are neglecting numbers of $e_i$ and $e_j$ respectively.

Assume that $e_i$ and $e_j$ have the same set-indexing number. Then, $mn_i-r_i=mn_j-r_j$. That is, $m=\frac{r_i-r_j}{n_i-n_j}$.
 
Conversely, assume that the set-indexing number of the common vertex of these edges is the quotient when the difference of the neglecting numbers of the edges is divided by the differences of set-indexing numbers of the end vertices that are not common to these edges. That is, $m=\frac{r_i-r_j}{n_i-n_j}$. Hence, $mn_i-r_i=mn_j-r_j$.

\noindent Therefore, the edges have the same set-indexing number.
\end{proof}

\noindent Due to the above proposition, we establish a necessary and sufficient condition for an IASL of a given graph to be a uniform IASL of $G$.

\begin{theorem}
Let $G$ be a graph that admits an IASL $f$. Then, $f$ is a uniform IASL of $G$ if and only if the set-indexing number of the common vertex of any two adjacent edges is the ratio of the difference between their neglecting numbers to the difference between the set-indexing numbers of their distinct end vertices.
\end{theorem}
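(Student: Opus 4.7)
The plan is to reduce this theorem to the preceding proposition by noting that uniformity of an IASL on a connected graph is equivalent to every pair of adjacent edges carrying the same set-indexing number. Since Definition \ref{DEFN-1a} specifies $k$-uniformity by $|f^+(e)|=k$ for every $e\in E(G)$, and since any two edges of a connected graph can be linked by a sequence of successively adjacent edges, the global uniformity condition is built up from the pairwise condition already characterized.

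For the forward direction, I would assume that $f$ is a uniform IASL, so $|f^+(e)|=k$ for every edge $e$. Then for any two adjacent edges $e_i=uv_i$ and $e_j=uv_j$ sharing the common vertex $u$, we have $|f^+(e_i)|=|f^+(e_j)|$, and the preceding proposition applies to give $|f(u)|=(r_i-r_j)/(n_i-n_j)$, which is exactly the claimed ratio.

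For the converse, I would assume the ratio condition holds for every pair of adjacent edges. The preceding proposition then immediately yields $|f^+(e_i)|=|f^+(e_j)|$ for each such pair. To conclude that \emph{all} edges of $G$ share a common set-indexing number, I would argue by propagation along an edge-walk: for arbitrary edges $e,e'$ of the connected graph $G$, choose a sequence $e=e_0,e_1,\dots,e_t=e'$ in which consecutive edges share a vertex, and chain together the pairwise equalities $|f^+(e_0)|=|f^+(e_1)|=\cdots=|f^+(e_t)|$. Hence $f$ is a uniform IASL.

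The main subtlety will be handling the degenerate case $n_i=n_j$, in which the stated ratio is formally undefined. Here the condition must be interpreted as asserting that $r_i-r_j$ vanishes whenever $n_i-n_j$ does, so that $mn_i-r_i=mn_j-r_j$ holds automatically regardless of $|f(u)|$. This is the only genuinely delicate point; apart from noting that the graph is implicitly connected (as the notion of $k$-uniform IASL in Definition \ref{DEFN-1a} presupposes), the rest of the argument is a direct citation of the preceding proposition together with walk-connectivity.
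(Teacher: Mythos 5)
Your proposal is correct and follows essentially the same route as the paper, which supplies no separate proof for this theorem but simply derives it from the preceding proposition (``Due to the above proposition, we establish \ldots''), exactly as you do. Your extra care in propagating the pairwise equality $|f^+(e_i)|=|f^+(e_j)|$ along edge-walks of a connected graph, and in flagging the degenerate case $n_i=n_j$ where the stated ratio is formally $0/0$, goes beyond what the paper records but does not constitute a different approach.
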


In the following theorem, we prove a necessary and sufficient condition for an IASL of a graph $G$, which assigns uniform set-labels to the vertices of $G$, to be a uniform IASL of $G$.
 
\begin{theorem}
Let the graph $G$ admits an IASL $f$ under which all vertices of $G$ are uniformly set-labeled. Then, $f$ is a uniform IASL if and only if all edges of $G$ have the same neglecting number.
\end{theorem}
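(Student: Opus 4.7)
The plan is to derive the equivalence as an almost immediate corollary of Theorem \ref{T-Card}, exploiting the hypothesis that the vertex labels share a common cardinality. Since $V(G)$ is $l$-uniformly set-indexed (say), every edge $e = uv$ has $|f(u)| = |f(v)| = l$, so Theorem \ref{T-Card} specializes to $|f^+(e)| = l^2 - r_e$, where $r_e$ denotes the neglecting number of $e$. The entire biconditional reduces to observing that in the expression $l^2 - r_e$, the term $l^2$ is a constant depending only on the common vertex cardinality, so $|f^+(e)|$ is constant across $E(G)$ if and only if $r_e$ is constant across $E(G)$.

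More precisely, I would first fix notation by letting $l$ be the common set-indexing number of the vertices of $G$ and, for each edge $e \in E(G)$, letting $r_e$ be its neglecting number with respect to the sets $f(u) \times f(v)$ (where $e = uv$). Applying Theorem \ref{T-Card} with $m = n = l$ then gives $|f^+(e)| = l^2 - r_e$ for every edge $e$.

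For the forward direction, assume $f$ is a uniform IASL, say $|f^+(e)| = k$ for all $e \in E(G)$. Then $l^2 - r_e = k$ for every $e$, so $r_e = l^2 - k$, showing that the neglecting number is the same constant for all edges. For the converse, if $r_e = r$ is a fixed constant for every $e \in E(G)$, then $|f^+(e)| = l^2 - r$ for every $e$, so $f$ is a uniform IASL with uniformity constant $l^2 - r$.

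There is no genuine obstacle here; the only thing to be careful about is invoking the right specialization of Theorem \ref{T-Card} (taking $m = n = l$ under the $l$-uniform vertex-labeling hypothesis) and making sure the neglecting number is interpreted consistently for both directions of the biconditional. The content of the theorem is that, once vertex cardinalities are forced to be equal, the two notions of uniformity (edge set-indexing number being constant, and edge neglecting number being constant) become linear reparametrizations of each other via $|f^+(e)| + r_e = l^2$.
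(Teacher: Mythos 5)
Your proposal is correct and follows essentially the same route as the paper: both specialize the formula $|f^+(e)|=mn-r$ of Theorem \ref{T-Card} to $m=n=l$ and read off the equivalence from $|f^+(e)|=l^2-r_e$. If anything, your forward direction is slightly cleaner, since you compare all edges directly via $r_e=l^2-k$ rather than only pairs of adjacent edges as the paper does.
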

\begin{proof}
Let $V(G)$ be $l$-uniformly set-labeled. Then, for any vertex $u$ in $G$, $|f(u)|=l$ and $f^+(e_i)f^+(uv)=l^2-r_i$. 

Let $f$ be a uniform IASI of $G$. Then, for any two adjacent edges $e_i=uv_i$ and $e_j=uv_j$ of $G$, we have 
\begin{eqnarray*}
|f^+(e_i)|& = &|f^+(e_j)|\\
\implies l^2-r_i & = & l^2-r_j\\
\implies r_i-r_j & = & 0\\
\implies r_i & = & r_j.
\end{eqnarray*}

Conversely, assume that all the edges of $G$ have the same neglecting number, say $r$. Therefore, for any edge $e=uv$ of $G$, $|f^+(e)|=|f(u)|\,|f(v)|-r$. Since the graph $V(G)$ is $l$-uniformly set-labeled, $|f(v)|=l$ for all $v\in V(G)$. Therefore, $|f^+(e)|=l^2-r$ for all edges $e\in E(G)$. That is, $f$ is a uniform IASL.
\end{proof}

A necessary and sufficient condition for a graph to have a $2$-uniform IASI is proved in \cite{TMKA} as follows.

\begin{theorem}
\cite{TMKA} A graph $G$ admits a $2$-uniform IASI if and only if $G$ is bipartite.
\end{theorem}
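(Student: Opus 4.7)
The plan is to prove both implications by analysing which pairs of vertex set-labels can yield an edge set-indexing number equal to $2$.

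For the forward direction, I would start from a $2$-uniform IASI $f$ and use Lemma \ref{L-0}, which gives $\max(|f(u)|,|f(v)|) \le |f^+(uv)| \le |f(u)|\,|f(v)|$. Applied with $|f^+(uv)|=2$, this forces $|f(u)|,|f(v)|\le 2$ and $|f(u)|\,|f(v)|\ge 2$. The key refinement is to rule out $|f(u)|=|f(v)|=2$. For this I would use the elementary fact that if $A=\{a_1<a_2\}$ and $B=\{b_1<b_2\}$ are sets of non-negative integers, then $a_1+b_1<a_1+b_2<a_2+b_2$ and $a_1+b_1<a_2+b_1<a_2+b_2$, so $|A+B|\ge 3$ (indeed this is the $|A|=|B|=2$ case of $|A+B|\ge |A|+|B|-1$ for finite sets of integers, which follows from chaining ordered sums). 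Hence for every edge $uv$ one of $|f(u)|,|f(v)|$ equals $1$ and the other equals $2$. Partitioning $V(G)=V_1\cup V_2$ according to whether $|f(v)|=1$ or $|f(v)|=2$ then gives a bipartition witnessing that $G$ is bipartite.

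For the converse, I would start with a bipartition $V(G)=V_1\cup V_2$ and exhibit an explicit $2$-uniform IASI. Write $V_1=\{u_1,\ldots,u_p\}$ and $V_2=\{w_1,\ldots,w_q\}$, fix $K=p+1$, and set
\begin{equation*}
f(u_i)=\{i\}, \qquad f(w_j)=\{jK,\,jK+1\}.
\end{equation*}
Then $f$ is injective on $V(G)$ (singletons versus $2$-element sets, and each family uses distinct values), and for every edge $e=u_iw_j$ we have $f^+(e)=\{i+jK,\,i+jK+1\}$, so $|f^+(e)|=2$, giving the $2$-uniform property. For the injectivity of $f^+$, if $f^+(u_iw_j)=f^+(u_{i'}w_{j'})$, equating the minimum elements gives $i+jK=i'+j'K$, which together with $1\le i,i'\le p<K$ forces $i=i'$ and $j=j'$; hence the edge is determined.

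I expect the only genuinely non-routine step to be the lower bound $|A+B|\ge 3$ when $|A|=|B|=2$, which is the combinatorial heart ruling out the $2$-$2$ case in the forward direction. Once that is in hand, the bipartition and the explicit labelling in the converse are straightforward verifications.
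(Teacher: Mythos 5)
Your proposal is correct and complete: the bound $|A+B|\ge|A|+|B|-1$ rules out two adjacent $2$-element labels, Lemma \ref{L-0} handles the remaining cases, and your explicit labelling with $K=p+1$ verifiably gives an injective $f$ and $f^{+}$ with every edge set-indexing number equal to $2$. Note that the paper itself states this theorem only as a cited result from \cite{TMKA} and reproduces no proof, so there is nothing internal to compare against; your argument is the natural one and matches the standard characterisation in that source.
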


In view of Lemma \ref{L-0} the following definitions are introduced in \cite{GS1} and \cite{GS2}.

\begin{definition}\label{D-WIASI}{\rm
\cite{GS1} A {\em weak IASI} is an IASI $f$ whose associated function is defined as $|f^+(uv)|=\max(|f(u)|,|f(v)|)$ for all $u,v\in V(G)$.   A graph which admits a weak IASI may be called a {\em weak IASI graph}. A weak  IASI is said to be {\em weakly uniform IASI} if $|f^+(uv)|=k$, for all $u,v\in V(G)$ and for some positive integer $k$.}
\end{definition}

\begin{definition}\label{D-SIASI}{\rm
\cite{GS2} An IASI $f$ is called a {\em strong IASI} if $|f^+(uv)|=|f(u)| |f(v)|$ for all $u,v\in V(G)$. A graph which admits a  strong IASI may be called a {\em strong IASI graph}. A  strong  IASI is said to be  {\em strongly uniform IASI} if $|f^+(uv)|=k$, for all $u,v\in V(G)$ and for some positive integer $k$.}
\end{definition}

Form the above definitions it follows that if $f;V(G)\to \mathcal{P}(\mathbb{N}_0)$ is a weak (or strong) IASI defined on a graph $G$, then for each adjacent pair of vertices $u$ and $v$ of $G$, each compatibility class of the pair of set-labels $f(u)$ and $f(v)$ is a trivial class.

Let $G$ be an IASI graph. Then, the following result for $G$ was proved in \cite{GS1}.

\begin{lemma}\label{L-IASI01}
\cite{GS1} Let $f:V(G)\to \mathcal{P}(\mathbb{N}_0)$ be an IASI defined on a graph $G$. Let $u$ and $v$ be any two adjacent vertices in $G$. Then, $|f^+(uv)|=\max(|f(u)|,|f(v)|)$ if and only if either $|f(u)|=1$ or $|f(v)|=1$.
\end{lemma}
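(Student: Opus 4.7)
The plan is to prove both implications directly, using only the elementary arithmetic of sumsets of finite subsets of $\mathbb{N}_0$.

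For the ``if'' direction, I would assume without loss of generality that $|f(u)|=1$, say $f(u)=\{a\}$. Then $f(u)+f(v)=\{a+b:b\in f(v)\}$, and since the map $b\mapsto a+b$ is a bijection from $f(v)$ onto $f(u)+f(v)$, we obtain $|f^+(uv)|=|f(v)|$. Because $|f(u)|=1\le|f(v)|$, this common value equals $\max(|f(u)|,|f(v)|)$, as required. (The case $|f(v)|=1$ is symmetric, and the degenerate case $|f(u)|=|f(v)|=1$ gives both sides equal to $1$.)

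For the ``only if'' direction, I would prove the contrapositive: assume $|f(u)|\ge 2$ and $|f(v)|\ge 2$, and show that $|f^+(uv)|>\max(|f(u)|,|f(v)|)$. Without loss of generality let $|f(u)|=m$ and $|f(v)|=n$ with $n\ge m\ge 2$, so $\max(|f(u)|,|f(v)|)=n$. Enumerate the elements in increasing order as $f(u)=\{a_1<a_2<\cdots<a_m\}$ and $f(v)=\{b_1<b_2<\cdots<b_n\}$. The key observation is the standard chain
\[
a_1+b_1<a_1+b_2<\cdots<a_1+b_n<a_2+b_n<a_3+b_n<\cdots<a_m+b_n,
\]
which exhibits $m+n-1$ strictly increasing, and hence distinct, elements of $f(u)+f(v)$. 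Therefore
\[
|f^+(uv)|\ge m+n-1\ge n+1>n=\max(|f(u)|,|f(v)|),
\]
contradicting the hypothesis. This forces $m=1$ or (by symmetry) $n=1$, completing the proof.

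Neither direction presents a real obstacle; the argument is essentially a packaging of the classical inequality $|A+B|\ge|A|+|B|-1$ for finite subsets of an abelian ordered group, specialized to $\mathbb{N}_0$. The only point requiring mild attention is the need to order the elements of $f(u)$ and $f(v)$ before exhibiting the chain, which is precisely what the reverse direction needs in order to sharpen Lemma~\ref{L-0} at the lower endpoint.
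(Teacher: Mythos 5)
Your proof is correct: the ``if'' direction via the translation bijection $b\mapsto a+b$ and the ``only if'' direction via the standard chain giving $|A+B|\ge |A|+|B|-1>\max(|A|,|B|)$ when both sets have at least two elements are both sound. The paper itself states this lemma only as a citation to \cite{GS1} without reproducing a proof, but your argument is the standard one for this result and fills that gap correctly.
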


This lemma leads us to the following result.

\begin{theorem}
An IASI $f$ of a graph $G$ is a weak IASI of $G$ if and only if at least one end vertex of any edge in $G$ has the set-indexing number $1$.
\end{theorem}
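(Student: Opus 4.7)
The plan is to observe that the theorem is nothing more than the global (edge-wise) version of Lemma \ref{L-IASI01}, which already characterizes the ``max-cardinality'' identity $|f^+(uv)|=\max(|f(u)|,|f(v)|)$ for a single adjacent pair. Since the defining condition for a weak IASI (Definition \ref{D-WIASI}) is precisely that this identity hold for every edge of $G$, the theorem should follow by simply quantifying Lemma \ref{L-IASI01} over $E(G)$.

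For the forward direction, I would assume that $f$ is a weak IASI and fix an arbitrary edge $uv\in E(G)$. By Definition \ref{D-WIASI}, $|f^+(uv)|=\max(|f(u)|,|f(v)|)$. Applying the ``only if'' part of Lemma \ref{L-IASI01} to this specific pair of adjacent vertices yields $|f(u)|=1$ or $|f(v)|=1$. Since the edge was arbitrary, every edge of $G$ has at least one endpoint with set-indexing number $1$.

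For the converse, I would suppose that every edge $uv\in E(G)$ has at least one endpoint satisfying $|f(u)|=1$ or $|f(v)|=1$. Fixing such an edge and applying the ``if'' direction of Lemma \ref{L-IASI01} gives $|f^+(uv)|=\max(|f(u)|,|f(v)|)$. Since this is exactly the defining identity of a weak IASI and it holds for every edge, $f$ is a weak IASI of $G$.

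The main obstacle is essentially bookkeeping rather than substance: one must be careful to invoke Lemma \ref{L-IASI01} edge-by-edge and verify that the hypothesis of the lemma (that $u$ and $v$ are adjacent) is in force each time, which is automatic since we are quantifying over $E(G)$. Thus no new set-theoretic computation is needed, and the proof reduces to a clean two-line application of the preceding lemma in each direction.
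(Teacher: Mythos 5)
Your proposal is correct and matches the paper's intent exactly: the paper offers no separate argument beyond the remark that ``this lemma leads us to the following result,'' i.e., the theorem is obtained by applying Lemma \ref{L-IASI01} edge by edge, which is precisely what you do. Nothing further is needed.
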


The following theorem describes a necessary and sufficient condition for a graph to admit a weakly uniform IASI.

\begin{theorem}
A connected graph $G$ admits a weakly uniform IASI $f$ if and only if $G$ is a bipartite graph.
\end{theorem}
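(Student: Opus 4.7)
The plan is to combine the immediately preceding theorem (which characterises weak IASIs by requiring at least one endpoint of every edge to have set-indexing number $1$) with the uniformity of $|f^+|$ to force a bipartite structure, and in the converse direction to construct an explicit labeling from any bipartition. I focus on the substantive case $k \ge 2$; for $k=1$ the weakly uniform condition collapses (by Lemma \ref{L-0}) to labeling every vertex with a singleton, which any graph trivially admits, so the characterisation is intended for non-trivial $k$.

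For necessity, suppose $f$ is a weakly uniform IASI with $|f^+(uv)|=k\ge 2$ for every edge $uv$. By the preceding theorem every edge has an endpoint of set-indexing number $1$, and by weak uniformity the other endpoint must have set-indexing number $k$. The first step is to rule out intermediate cardinalities: if some vertex $v$ had $1<|f(v)|<k$, then any neighbour $u$ of $v$ would need $|f(u)|=k$ to satisfy the $\max$ condition, yet then neither $|f(u)|$ nor $|f(v)|$ equals $1$, contradicting the weak condition (and since the paper excludes isolated vertices, $v$ does have a neighbour). Hence every vertex of $G$ belongs to exactly one of $A=\{v:|f(v)|=1\}$ or $B=\{v:|f(v)|=k\}$, and no edge can lie inside $A$ (the $\max$ would be $1\ne k$) or inside $B$ (neither endpoint would have cardinality $1$). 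Thus $(A,B)$ is a bipartition of $G$.

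For sufficiency, let $G$ have bipartition $(X,Y)$ with $X=\{x_1,\dots,x_n\}$ and $Y=\{y_1,\dots,y_m\}$, and fix any $k\ge 2$. Pick an integer $N>n+k$ and define $f(x_i)=\{i\}$ for $1\le i\le n$ and $f(y_j)=\{Nj,\,Nj+1,\dots,Nj+k-1\}$ for $1\le j\le m$. The function $f$ is injective on vertices, and for every edge $x_iy_j$ the sum set $f^+(x_iy_j)=\{i+Nj,\,i+Nj+1,\dots,i+Nj+k-1\}$ is a block of $k$ consecutive integers, so $|f^+(e)|=k$ uniformly across all edges. Distinctness of edge labels follows because the smallest element $i+Nj$ uniquely recovers $(i,j)$ via reduction modulo $N$ (since $1\le i\le n<N$). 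Therefore $f$ is a weakly uniform IASI of $G$.

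The main obstacle is the structural step in the forward direction, namely proving that no vertex can carry a set-indexing number strictly between $1$ and $k$. Once this dichotomy is established, both the bipartite conclusion and the explicit arithmetic-progression construction for the converse fall out with little extra work.
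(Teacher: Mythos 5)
Your proof is correct, but note that the paper itself supplies no argument for this statement --- it is quoted from \cite{GS1} without proof --- so there is no in-paper proof to compare against; what you have written is a complete, self-contained justification. Your necessity argument correctly chains Lemma \ref{L-IASI01} (one endpoint of every edge must be a singleton) with the uniform value $k$ of $\max(|f(u)|,|f(v)|)$ to force every non-isolated vertex into the dichotomy $|f(v)|\in\{1,k\}$, and the two classes then cannot contain internal edges; your sufficiency construction with $f(x_i)=\{i\}$ and $f(y_j)$ an arithmetic block starting at $Nj$ with $N>n+k$ does give pairwise distinct vertex labels, $k$-element edge labels satisfying $|f^+(e)|=\max$, and injectivity of $f^+$ via recovery of $(i,j)$ from the minimum element. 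One point worth emphasising: your restriction to $k\ge 2$ is not merely a convenience but is needed for the statement to be true as written, since for $k=1$ every graph (e.g.\ $K_3$ with labels $\{1\},\{2\},\{3\}$) admits a weakly $1$-uniform IASI; the theorem should be read with that implicit hypothesis, and flagging it is a genuine improvement on the bare statement in the survey.
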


The necessary and sufficient condition for the sum set of two sets to have maximum cardinality is established in the following result. 

\begin{lemma}\label{L-RDS}
\cite{GS2} Let $A$, $B$ be two non-empty subsets of $\mathbb{N}_0$. Let $D_A$ and $D_B$ be the sets of all differences between two elements in the sets $A$ and $B$ respectively. Then, $|A+B|=|A|.|B|$ if and only if  $D_A$ and $D_B$ are disjoint.
\end{lemma}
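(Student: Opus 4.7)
The plan is to reduce the claim to a simple bookkeeping observation about when the sum map $\sigma:A\times B\to A+B$, $\sigma(a,b)=a+b$, fails to be injective. By Lemma \ref{L-0} we already know $|A+B|\le |A|\cdot |B|$, so equality holds if and only if $\sigma$ is a bijection onto its image, which in turn is equivalent to saying that no two distinct pairs $(a_1,b_1)\ne (a_2,b_2)$ in $A\times B$ satisfy $a_1+b_1=a_2+b_2$.

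The key rewriting is to observe that $a_1+b_1=a_2+b_2$ is the same as $a_1-a_2=b_2-b_1$. First I would argue that in any such collision the two pairs must differ in \emph{both} coordinates: if $a_1=a_2$ then $b_1=b_2$, contradicting $(a_1,b_1)\ne(a_2,b_2)$. Hence any collision produces a nonzero common value in $D_A$ and $D_B$ (interpreting these as sets of differences of distinct elements of $A$ and $B$ respectively, so that $0\notin D_A\cup D_B$).

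Forward direction: assume $D_A\cap D_B\ne \emptyset$. Pick a common value $d$, so there exist $a_1\ne a_2$ in $A$ with $a_1-a_2=d$ and $b_1\ne b_2$ in $B$ with $b_2-b_1=d$. Then $(a_1,b_1)$ and $(a_2,b_2)$ are distinct pairs with the same image under $\sigma$, so $|A+B|<|A|\cdot|B|$. Conversely, if $|A+B|<|A|\cdot |B|$, choose any collision $a_1+b_1=a_2+b_2$ with $(a_1,b_1)\ne(a_2,b_2)$; by the observation above, both coordinates differ, and $a_1-a_2=b_2-b_1$ exhibits a common nonzero element of $D_A$ and $D_B$. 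Combining the two directions gives the biconditional.

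The only subtlety — and the one I would flag as the main care-point rather than a genuine obstacle — is making precise the convention on $D_A$ and $D_B$: we must take differences of \emph{distinct} elements (or, equivalently, restrict to nonzero differences), for otherwise $0\in D_A\cap D_B$ trivially since both $A$ and $B$ are nonempty, and the stated equivalence would break. Once this convention is fixed, the argument is immediate and no further computation is required.
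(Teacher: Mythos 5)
Your argument is correct and complete: reducing the equality $|A+B|=|A|\cdot|B|$ to injectivity of the sum map and identifying each collision $a_1+b_1=a_2+b_2$ with a common nonzero element of $D_A$ and $D_B$ is exactly the right mechanism, and your insistence that the difference sets be taken over \emph{distinct} elements (so that $0\notin D_A\cap D_B$ trivially) is a necessary and well-placed precaution. The paper itself gives no proof of this lemma --- it is quoted from the cited reference [GS2] --- so there is no in-text argument to compare against; yours is the standard and natural one.
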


\noindent The sets $D_A$ of all differences between two elements of  set $A$ is called the {\em difference set} of $A$.

In view of the above lemma,  a necessary and sufficient condition for an IASI to be a strong IASI is established in \cite{GS2} as follows.

\begin{theorem}
An IASI $f$ of a graph $G$ is a strong IASI of $G$ if and only if the difference sets of the set-labels of any two adjacent vertices in $G$ are disjoint sets.
\end{theorem}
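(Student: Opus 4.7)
The plan is to reduce the statement directly to Lemma \ref{L-RDS} applied edge by edge. An IASI $f$ is strong precisely when the sum-set $f(u)+f(v)$ attains the maximum possible cardinality $|f(u)|\,|f(v)|$ for every edge $uv\in E(G)$, by Definition \ref{D-SIASI}. Thus the whole question decouples: the equality $|f^+(uv)|=|f(u)|\,|f(v)|$ is an edge-local condition, and I just need a characterization of when this equality holds for a single pair of sets $A=f(u)$, $B=f(v)$.

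For the forward direction, I would assume $f$ is a strong IASI and pick an arbitrary edge $uv\in E(G)$. Setting $A=f(u)$ and $B=f(v)$, strongness gives $|A+B|=|A|\,|B|$; Lemma \ref{L-RDS} then yields $D_A\cap D_B=\emptyset$. Since $uv$ was arbitrary, the difference sets of the set-labels of any two adjacent vertices are disjoint. For the converse, suppose that for every edge $uv\in E(G)$ the difference sets $D_{f(u)}$ and $D_{f(v)}$ are disjoint. Again invoking Lemma \ref{L-RDS} with $A=f(u)$ and $B=f(v)$ gives $|f(u)+f(v)|=|f(u)|\,|f(v)|$, i.e.\ $|f^+(uv)|=|f(u)|\,|f(v)|$, so $f$ is a strong IASI by Definition \ref{D-SIASI}.

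There is essentially no obstacle here, as both directions are immediate consequences of Lemma \ref{L-RDS} once one observes that the defining condition of a strong IASI is an edge-local statement equivalent to the maximality of the sum-set cardinality. The only care needed is to make explicit that the correspondence $A\leftrightarrow f(u)$, $B\leftrightarrow f(v)$ is taken over every edge of $G$, so that a single application of the lemma per edge suffices in either direction. No case analysis or additional combinatorial argument is required.
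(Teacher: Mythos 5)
Your proposal is correct and follows exactly the route the paper intends: the theorem is presented as an immediate consequence of Lemma \ref{L-RDS} ("in view of the above lemma"), applied to the pair $A=f(u)$, $B=f(v)$ for each edge $uv$, which is precisely your edge-by-edge reduction. No discrepancy with the paper's argument.
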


The following theorem describes a necessary and sufficient condition for a graph to admit a strongly uniform IASI.

\begin{theorem}
A connected graph $G$ admits a strongly $k$-uniform IASI $f$ if and only if $G$ is a bipartite graph or $f$ is a $(k,l)$-completely uniform IASI of $G$, where $k=l^2$.
\end{theorem}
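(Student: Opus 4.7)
The plan is to handle the two implications separately. In the forward direction, the cardinality equation $|f(u)||f(v)|=k$ along edges together with the parity of cycles forces a dichotomy; in the backward direction, we either recognise that a completely uniform IASI is automatically strong, or else build an explicit strong IASI on a bipartite graph using Lemma~\ref{L-RDS}.

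For the forward direction, suppose $f$ is a strongly $k$-uniform IASI on a connected graph $G$, so that every edge $uv$ satisfies $|f(u)||f(v)|=k$. If $G$ is bipartite, there is nothing more to prove. Otherwise, $G$ contains an odd cycle $C:v_1v_2\cdots v_{2n+1}v_1$. Along $C$, the edge condition forces $|f(v_{i+1})|=k/|f(v_i)|$, so the cardinalities alternate between some value $a$ and $k/a$; the closing edge of the odd cycle identifies these two values, yielding $a=k/a$, hence $k=a^{2}$. Set $l:=a$, so every vertex of $C$ has set-indexing number $l$ and $k=l^{2}$. For any other vertex $w$, connectedness supplies a walk $w=u_0,u_1,\ldots,u_m$ with $u_m\in V(C)$; the same alternation argument gives $|f(u_i)|\in\{|f(w)|,k/|f(w)|\}$, and since $|f(u_m)|=l$ and $k/l=l$, we conclude $|f(w)|=l$. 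Thus $V(G)$ is $l$-uniformly set-indexed and $f$ is a $(k,l)$-completely uniform IASI with $k=l^{2}$.

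For the converse there are two cases. If $f$ is given as a $(k,l)$-completely uniform IASI with $k=l^{2}$, then for every edge $uv$ we have $|f^{+}(uv)|=k=l\cdot l=|f(u)||f(v)|$, so $f$ is strong by Definition~\ref{D-SIASI} and strongly $k$-uniform by construction. If instead $G$ is bipartite with bipartition $(V_1,V_2)$, pick any factorisation $k=ab$ and set
\[
A_0=\{0,1,\ldots,a-1\},\qquad B_0=\{0,a,2a,\ldots,(b-1)a\},
\]
so that $D_{A_0}=\{\pm 1,\ldots,\pm(a-1)\}$ and $D_{B_0}=\{\pm a,\ldots,\pm(b-1)a\}$ are disjoint. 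Assign pairwise distinct translates of $A_0$ to the vertices of $V_1$ and pairwise distinct translates of $B_0$ to the vertices of $V_2$, choosing the shifts sufficiently separated so that all set-labels across $G$ remain distinct. Translation preserves difference sets, so Lemma~\ref{L-RDS} gives $|f^{+}(uv)|=|f(u)+f(v)|=ab=k$ for every edge $uv$, and $f$ is a strongly $k$-uniform IASI of $G$.

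The main obstacle is the bipartite construction, where one has to juggle three requirements simultaneously: a common difference set on each side of the bipartition, disjointness of the two difference sets, and global injectivity of $f$ on $V(G)$. Translating two fixed templates $A_0$ and $B_0$ with disjoint difference sets handles the first two requirements automatically, while spacing the translates out takes care of injectivity. The forward direction is essentially an odd-cycle parity argument plus propagation along walks, and should go through without real difficulty.
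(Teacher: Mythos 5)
The paper itself states this theorem without proof (it is a survey item), so there is no in-paper argument to compare against; judged on its own, your proposal is essentially sound and both directions are argued along the natural lines: the odd-cycle parity argument correctly forces $k=l^2$ and propagates the common cardinality $l$ to all vertices by connectedness, and the converse correctly splits into the "completely uniform implies strong" observation and an explicit bipartite construction via Lemma~\ref{L-RDS}.

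The one genuine gap is in the bipartite construction: a strongly $k$-uniform IASI is in particular an \emph{IASI}, so by Definition~\ref{DEFN-1} the induced map $f^+$ on edges must also be injective, not just $f$ on vertices. In your construction every edge label is a translate $\{0,1,\dots,ab-1\}+(s_u+t_v)$ of $A_0+B_0$, where $s_u$ and $t_v$ are the shifts applied on the two sides of the bipartition; two edges $u v$ and $u'v'$ receive the same label exactly when $s_u+t_v=s_{u'}+t_{v'}$, and "choosing the shifts sufficiently separated so that all set-labels remain distinct" only guarantees distinctness of the \emph{vertex} labels, not of these sums. The fix is easy -- for instance take $s_{u_i}=iM$ and $t_{v_j}=jM|V_1|$ for $M$ large, so that the sums $s_{u_i}+t_{v_j}$ are pairwise distinct -- but as written the edge-injectivity requirement is not addressed. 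A second, cosmetic point: Lemma~\ref{L-RDS} as stated in the paper concerns difference sets containing $0$ (every set has $0=a-a$ in its difference set), so you should read $D_A$, $D_B$ as the sets of \emph{nonzero} differences for the disjointness criterion to be usable; your computation of $D_{A_0}$ and $D_{B_0}$ already implicitly does this.
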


\section{Conclusion}
So far, we have reviewed the studies about integer additive set-labelings and integer additive set-indexers of certain graphs and their characteristics. In this paper, we also propose some new results in this area. Certain problems regarding the admissibility of integer additive set-indexers by various other graph classes are still open.

More properties and characteristics of different types of IASLs and IASIs, both uniform and non-uniform, are yet to be investigated. There are some more open problems regarding  necessary and sufficient conditions for various graphs and graph classes to admit certain IASIs.

\section*{Acknowledgement}
The authors dedicate this work to the memory of Professor Belamannu Devadas Acharya who introduced the concept of integer additive set-indexers of graphs.

\end{document}